\renewcommand{\epsilon}{\varepsilon}
\newtheorem{theorem}{Theorem}
\newtheorem{corollary}[theorem]{Corollary}
\newenvironment{proof}[1][Proof]{\noindent\textbf{#1.} }{\ \rule{0.5em}{0.5em}}
\def\barr{\begin{array}}
	\def\earr{\end{array}}
\def\bec#1{\begin{equation}\label{#1}}
\def\becn{\begin{equation*}}
\def\endec{\end{equation}}
\def\endecn{\end{equation*}}
\let\@fnsymbol\@arabic
\begin{document}
	\title{Closed-form Saint-Venant solutions in the Koiter theory of shells}
	\author{ Mircea B\^irsan\thanks{Mircea B\^irsan, \ \  Fakult\"{a}t f\"{u}r Mathematik,
			Universit\"{a}t Duisburg-Essen, Thea-Leymann Str. 9, 45127 Essen, Germany; and  Department of Mathematics, Alexandru Ioan Cuza University of Ia\c si,  Blvd.
			Carol I, no. 11, 700506 Ia\c si,
			Romania.;  email: mircea.birsan@uni-due.de}  
	}

\maketitle

\begin{abstract}
In this paper we investigate the deformation of cylindrical linearly elastic shells using the Koiter model. We formulate and solve the relaxed Saint-Venant's problem for thin cylindrical tubes made of isotropic and homogeneous elastic materials. To this aim, we adapt a method established previously in the three-dimensional theory of elasticity. We present a general solution procedure to determine closed-form solutions for the extension, bending, torsion and flexure problems. We remark the analogy and formal resemblance of these solutions to the classical Saint-Venant's solutions for solid cylinders. The special case of circular cylindrical shells is also discussed.
\end{abstract}
\bigskip

\noindent\textbf{Mathematics Subject Classification:} 74K25, 74G05, 74B05.
\bigskip

\noindent\textbf{Keywords}: Koiter shells; Saint-Venant's problem; extension; bending; torsion; flexure; closed-form solution; cylindrical elastic shells.

\section{Introduction}
\label{intro}

In this paper, we consider the linear theory of elastic shells using the model of Koiter \cite{Koiter60,Koiter70}. The derivation of Koiter's theory of shells starting from the three-dimensional elasticity theory  has been presented in a new perspective in \cite{Steigmann13}. For the mathematical investigation of Koiter's equations for linearly elastic shells we refer to the books of Ciarlet \cite{Ciarlet00,Ciarlet05}. 

Within this framework, we approach the well-known Saint-Venant's problem, which consists in determining the equilibrium deformation of a cylindrical shell subjected to mechanical loads distributed over its end edges. Due to its significance in practice, this problem has been intensively studied both in the three-dimensional elasticity (i.e., for solid cylinders), as well as in the shell theory \cite{Berdichevsky92,Ladeveze04,Reissner72}. Barr\'{e} de Saint-Venant \cite{Saint-Venant1843} have found a set of solutions to the relaxed formulation of the problem, which represent a classical result in the three-dimensional elasticity and are widely used in engineering. 
In our paper, we derive the analogues of the classical Saint-Venant's solutions for the deformation of cylindrical shells (elastic tubes). To this purpose we adapt the method established by Ie\c san \cite{Iesan86,Iesan87} in the context of three-dimensional elasticity for the case of classical shells.

We recapitulate first the Koiter's equations for linearly elastic shells (Section \ref{Sect2}) and formulate the relaxed Saint-Venant's problem for cylindrical shells (Section \ref{Sect3}). Due to the linearity of the theory, we can decompose it into two problems, namely the extension-bending-torsion problem and the flexure problem.
In Section \ref{Sect4} we solve these two problems and determine an exact solution written in closed form. 
In Section \ref{Sect5} we deduce an approximate form of the solution, valid for shells with a very small thickness. We observe that the simplified form of the solution for shells is similar to the classical Saint-Venant's solution for solid cylinders. These results hold for isotropic and homogeneous cylindrical tubes with arbitrary shape of the cross-section. Finally, we apply these general results in Section \ref{Sect6} to the case of circular cylindrical shells and derive the solutions in closed form.

We mention that some of the results obtained here are already known and can be found in the literature, since this is an important problem of the elasticity theory and the Koiter model of shells is classical. However, we present the solutions of the extension, bending, torsion and flexure problems in an unified manner, using a general solution procedure, which is employed in the Koiter shell theory for the first time.
This method to solve the relaxed Saint-Venant's problem can also be extended to the case of anisotropic shells.

\section{Koiter's equations for linear elastic shells}
\label{Sect2}

In order to introduce the necessary notations, we present briefly the basic equations of the linear shell model of Koiter.

Let $ \mathcal{S} $ be the reference midsurface of a linearly elastic shell and let $ (\theta^1,\theta^2) $ denote the curvilinear material coordinates on $ \mathcal{S} $. Further, we designate by  $ \boldsymbol r(\theta^1,\theta^2) $ the position vector of any point on $ \mathcal{S} $ with respect to a fixed origin $ O $;
the covariant base vectors are $ \boldsymbol a_\alpha \,$, the contravariant base vectors  $ \boldsymbol a^\alpha \,$,
the unit normal  $ \boldsymbol n $, the metric tensor is $ a_{\alpha\beta}\, $, and the curvature tensor  $ \kappa_{\alpha\beta} \,$. They are given by the following well-known relations
\begin{equation}\label{eq1}
\begin{array}{c}
\displaystyle\boldsymbol a_\alpha= \frac{\partial \boldsymbol r}{\partial\theta^\alpha}=\boldsymbol r_{,\alpha}\,,\qquad \boldsymbol a^\alpha\cdot \boldsymbol a_\beta  =\delta^\alpha_\beta\,,\qquad \boldsymbol n = \frac{\boldsymbol a_1\times \boldsymbol a_2}{| \boldsymbol a_1\times \boldsymbol a_2|}\,, \vspace{6pt}\\  
a_{\alpha\beta} = \boldsymbol a_\alpha\cdot \boldsymbol a_\beta \,,\qquad \kappa_{\alpha\beta} = -\boldsymbol n_{,\alpha}\cdot \boldsymbol a_\beta \,,
\end{array}
\end{equation}
where the Greek indices take the values $ \{1,2\} $.
According to the usual indicial notation, a subscript comma represents partial differentiation. The summation convention for repeated indices is also used.

If $ \boldsymbol u(\theta^1,\theta^2) $ denotes the displacement field, then the linearized strain tensors are the change of metric tensor $ \epsilon_{\alpha\beta} $ and change of curvature tensor $ \rho_{\alpha\beta}\, $, given by
\begin{equation}\label{eq2}
\epsilon_{\alpha\beta} = \frac12(\boldsymbol a_\alpha\cdot \boldsymbol u_{,\beta} + \boldsymbol a_\beta\cdot \boldsymbol u_{,\alpha} ) ,\qquad
\rho_{\alpha\beta} = \boldsymbol n\cdot \boldsymbol u_{;\alpha\beta} = \boldsymbol n\cdot (\boldsymbol u_{,\alpha\beta}- \Gamma^\lambda_{\alpha\beta} \boldsymbol u_{,\lambda}),
\end{equation}
where $ \Gamma^\lambda_{\alpha\beta} $ are the Christoffel symbols and the subscript semicolon denotes covariant derivative.

\begin{figure}
	\begin{center}
		\includegraphics[scale=0.25]{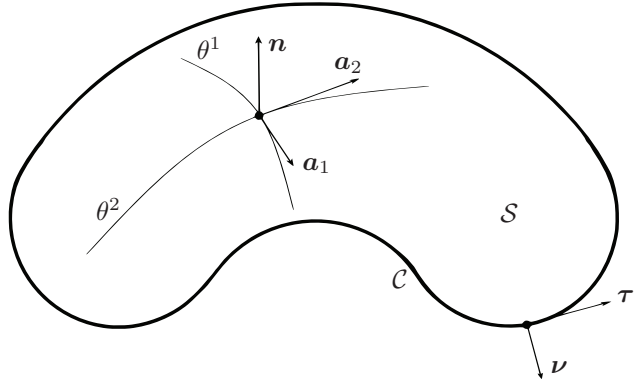}
		\put(-13,30){$ \boldsymbol \tau $} \put(-38,3){$ \boldsymbol\nu $}
		\put(-98,35){$ \mathcal{C} $} \put(-57,60){$ \mathcal{S} $}
		\put(-132,80){$ \boldsymbol a_1 $} \put(-120,118){$ \boldsymbol a_2 $}
		\put(-145,125){$ \boldsymbol n $}
		\put(-172,122){$ \theta^1 $} \put(-210,61){$ \theta^2 $}
		\caption{The reference midsurface $ \mathcal{S} $ of the shell.}
		\label{Fig1}       
	\end{center}
\end{figure}

The equilibrium equations can be written in the form
$$
\boldsymbol T^\alpha{}_{;\alpha} + \boldsymbol g = \boldsymbol 0,
$$
or equivalently,
\begin{equation}\label{eq3}
P^{\beta\alpha}{}_{;\alpha} - \kappa_\alpha^\beta S^\alpha + g^\beta =0,\qquad 
S^\alpha{}_{;\alpha}  + \kappa_{\beta\alpha} P^{\beta\alpha} + g =0,
\end{equation}
where $ \boldsymbol g= g^\alpha\boldsymbol a_\alpha + g\,\boldsymbol n $ is the assigned force per unit area and 
\begin{equation}\label{eq4}
\boldsymbol T^\alpha = P^{\beta\alpha}\boldsymbol a_\beta + S^\alpha\boldsymbol n\,.
\end{equation}
The tangential tractions $ P^{\beta\alpha} $ and transverse shear tractions $ S^\alpha $ are expressed in terms of the stress and couple stress tensors $ N^{\alpha\beta} $ and $ M^{\alpha\beta} $ by the relations
\begin{equation}\label{eq5}
P^{\alpha\beta} = N^{\alpha\beta} + \kappa_{\gamma}^\alpha M^{\gamma\beta}, \qquad  
S^\alpha = -M^{\beta\alpha}{}_{;\beta} =  -M^{\beta\alpha}{}_{,\beta} - \Gamma^\beta_{\gamma\beta}M^{\gamma\alpha} - \Gamma^\alpha_{\gamma\beta}M^{\beta\gamma} .
\end{equation}
For isotropic materials the stress-strain relations have the form 
\begin{equation}\label{eq6}
N^{\alpha\beta} = C\left[\nu a^{\alpha\beta}\epsilon_\gamma^\gamma + (1-\nu)\epsilon^{\alpha\beta} \right],
\qquad
M^{\alpha\beta} = D\left[\nu a^{\alpha\beta}\rho_\gamma^\gamma + (1-\nu)\rho^{\alpha\beta} \right],
\end{equation}
where $ C=\frac{Eh}{1-\nu^2}\, $ is the stretching stiffness and $ D=\frac{Eh^3}{12(1-\nu^2)}\, $ the bending stiffness of the shell; $ E $ is the Young modulus of the material, $ \nu $ the Poisson ratio and $ h $ the thickness of the shell.

Let $ \mathcal{C} $ be the boundary curve of the surface $ \mathcal{S} $, see Figure \ref{Fig1}. According to the Koiter model, we consider the boundary conditions in the form
\begin{equation}\label{eq7}
\boldsymbol T^\alpha \nu_\alpha - \Big( M^{\alpha\beta}\nu_\alpha\tau_\beta\,\boldsymbol n\Big)_{,s} =\boldsymbol f
\qquad\mbox{and}\qquad 
M^{\alpha\beta}\nu_\alpha\nu_\beta\, \boldsymbol n = \boldsymbol c\,,
\end{equation}
where $ \boldsymbol f $ and $ \boldsymbol c $ are the assigned force and moment per unit length of $ \mathcal{C} $. Here, $ \boldsymbol \tau = \tau_\alpha\boldsymbol a^\alpha $ is the unit tangent vector along  $ \mathcal{C} $ and $ \boldsymbol \nu = \nu_\alpha\boldsymbol a^\alpha $ is the unit outer normal to $ \mathcal{C} $ lying in the tangent plane, while $ s $ is the arclength parameter along $ \mathcal{C} $.

On the basis of the equations (\ref{eq3}) and boundary conditions (\ref{eq7}) one can show that the following relations hold in case of equilibrium
\begin{equation}\label{eq8}
\int_{\mathcal{S}} \boldsymbol g \,\mathrm{d}a + 
\int_{\mathcal{C}}  \boldsymbol f \,\mathrm{d}s = \boldsymbol 0,\qquad
\int_{\mathcal{S}} \boldsymbol r\times \boldsymbol g \,\mathrm{d}a + 
\int_{\mathcal{C}} \big(\boldsymbol r\times \boldsymbol f + \boldsymbol \nu\times\boldsymbol c \big) \,\mathrm{d}s = \boldsymbol 0.
\end{equation}

\section{Formulation of the relaxed Saint-Venant's problem for cylindrical shells}
\label{Sect3}

We consider general closed cylindrical shells, i.e. cylindrical thin tubes of arbitrary cross section. The cylindrical midsurface $ \mathcal{S} $ of the shell is referred to an orthogonal Cartesian coordinate frame $ Ox_1x_2x_3 $ with unit vectors $ \boldsymbol e_i $ along the axes $ Ox_i $ (the Latin indices take the values $ \{1,2,3\} $), see Figure \ref{Fig2}. The curvilinear coordinates on $ \mathcal{S} $ are denoted by $ \theta^1=s $, $ \theta^2=z $ and the parametrization of $ \mathcal{S} $  is given by
\begin{equation}\label{eq9}
\boldsymbol r(s,z) = x_\alpha(s)\,\boldsymbol e_\alpha + z\,\boldsymbol e_3\,,\qquad s\in[0,\bar{s}],\; z\in [0,\ell],
\end{equation}
where $ x_\alpha(s) $ are given functions of class $ C^2[0,\bar{s}] $, which describe the form of the cross-section curve. Here, $ s $ is the arclength parameter along the cross-section curves and $ z=x_3 $ is the distance to the coordinate plane $ Ox_1x_2\, $. Thus, $ \bar{s} $ stands for the length of the cross-section curve.

\begin{figure}
	\begin{center}
	\includegraphics[scale=0.3]{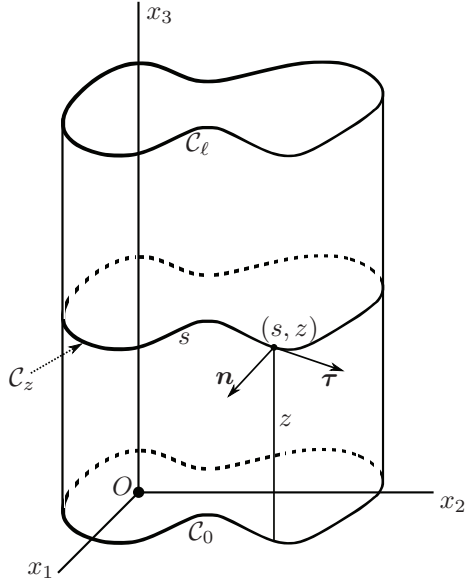}
\put(-12,42){$ x_2 $} \put(-168,17){$ x_1 $} \put(-123,227){$ x_3 $}
\put(-136,47){$ O $} \put(-107,30){$ \mathcal{C}_0 $}
\put(-73,73){$ z $} \put(-97,89){$ \boldsymbol n $} \put(-57,88){$ \boldsymbol \tau $}
\put(-80,107){$ (s,z) $} \put(-111,104){$ s $} 
\put(-175,88){$ \mathcal{C}_z $} \put(-108,177){$ \mathcal{C}_\ell $}
	\caption{The reference configuration of the cylindrical shell.}
	\label{Fig2}       
\end{center}
\end{figure}

We denote by $ \mathcal{C}_z $ the cross-section curve at the distance $ x_3=z $; accordingly, $ \mathcal{C}_0 $ and $ \mathcal{C}_\ell $ are the end edge curves described by $ x_3=0 $ and $ x_3=\ell $, respectively. We note that the curvilinear coordinate system $ \theta^1=s $, $ \theta^2=z $ is orthonormal and we have
\begin{equation}\label{eq10}
\begin{array}{c}
\displaystyle\boldsymbol a_1= \boldsymbol a^1= \frac{\partial \boldsymbol r}{\partial s} = \boldsymbol\tau = x_\alpha^\prime(s)\boldsymbol e_\alpha  \,,\qquad \boldsymbol a_2= \boldsymbol a^2 =\frac{\partial \boldsymbol r}{\partial z} = \boldsymbol e_3\,,
\vspace{6pt}\\ 
\displaystyle\boldsymbol n = \frac{\boldsymbol a_1\times \boldsymbol a_2}{| \boldsymbol a_1\times \boldsymbol a_2|} = e_{\alpha\beta}x_\beta^\prime(s)\boldsymbol e_\alpha \,,
\end{array}
\end{equation}
where $ e_{\alpha\beta} $ is the two-dimensional alternator (i.e., $ e_{12}=-e_{21}=1$,  $ e_{11}=e_{22}=0$); $ \boldsymbol \tau $ and $ \boldsymbol n $ are the unit tangent and normal vectors to the cross-section curve $ \mathcal{C}_z \,$. They satisfy the relations
\begin{equation}\label{eq11}
\boldsymbol\tau^{\prime}(s)= -\frac{1}{R(s)}\,\boldsymbol n(s),\quad 
\boldsymbol n^{\prime}(s)= \frac{1}{R(s)}\,\boldsymbol\tau(s)
\quad \mbox{with}\quad 
\frac{1}{R(s)}= e_{\alpha\beta}x_\alpha^\prime(s)x_\beta^{\prime\prime}(s),
\end{equation}
where $ R(s) $ is the curvature radius of the curve $ \mathcal{C}_z \,$. The covariant and contravariant components of any tensor are equal. We have 
$ a_{\alpha\beta}=\delta_{\alpha\beta} $ (Kronecker symbol) and
\begin{equation}\label{eq12}
\kappa_{11}= -\frac{1}{R(s)}\;,\qquad \kappa_{12}=\kappa_{21}=\kappa_{22}=0,\qquad \Gamma_{\alpha\beta}^\gamma=0.
\end{equation}
In view of $ \theta^1=s $, $ \theta^2=z $, we use the subscripts $ s $ and $ z $ instead of 1 and 2, respectively, for any tensor components. Thus, the strain measures (\ref{eq2}) for cylindrical shells can be written in the form
\begin{equation}\label{eq13}
\begin{array}{c}
\epsilon_{ss}= \boldsymbol u_{,s}\cdot \boldsymbol\tau\,, \qquad 
\epsilon_{sz}= \epsilon_{zs}= \frac12 (\boldsymbol u_{,z}\cdot \boldsymbol \tau + \boldsymbol u_{,s}\cdot\boldsymbol e_3),\qquad 
 \epsilon_{zz}=  \boldsymbol u_{,z}\cdot\boldsymbol e_3\,,
 \vspace{4pt}\\
 \rho_{ss}= \boldsymbol u_{,ss}\cdot \boldsymbol n\,, \qquad 
 \rho_{sz}= \rho_{zs}= \boldsymbol u_{,sz}\cdot \boldsymbol n,\qquad 
 \rho_{zz}=  \boldsymbol u_{,zz}\cdot \boldsymbol n\,.
\end{array}
\end{equation}
The equilibrium equations (\ref{eq3}) in the absence of body forces ($ \boldsymbol g = \boldsymbol 0 $) reduce to
\begin{equation}\label{eq14}
P_{ss,s} + P_{sz,z} + \frac{1}{R}\, S_s =0, \qquad
P_{zs,s} + P_{zz,z} = 0, \qquad
S_{s,s} +  S_{z,z} - \frac{1}{R}\,P_{ss}  =0.
\end{equation}
According to (\ref{eq5}) and (\ref{eq6}) we have the constitutive relations
\begin{equation}\label{eq15}
\begin{array}{c}
\displaystyle
P_{ss}  = N_{ss}- \frac{1}{R}\,M_{ss}\,,\qquad 
P_{sz} = N_{sz}  -\frac{1}{R}\, M_{sz}\, , \qquad
P_{zs}  = N_{zs}\,,
\vspace{4pt}\\
P_{zz} = N_{zz}\,, \qquad 
S_{s} =  -M_{ss,s}-M_{zs,z}\,,\qquad
S_{z}  = -M_{sz,s}-M_{zz,z}
\end{array}
\end{equation}
and
\begin{equation}\label{eq16}
\begin{array}{c}
\displaystyle
N_{ss}  = C(\epsilon_{ss}+ \nu\epsilon_{zz}),\quad 
N_{sz} = N_{zs} = C(1-\nu) \epsilon_{sz}\, , \quad
N_{zz}  = C(\nu\epsilon_{ss}+ \epsilon_{zz}),
\vspace{4pt}\\
M_{ss}  = D(\rho_{ss}+ \nu\rho_{zz}),\quad 
M_{sz} = M_{zs} = D(1-\nu) \rho_{sz}\, , \quad
M_{zz}  = D(\nu\rho_{ss}+ \rho_{zz}).
\end{array}
\end{equation}
The boundary of $ \mathcal{S} $ consists of the end edge curves $ \mathcal{C}_0 $ and $ \mathcal{C}_\ell \,$. Thus, the boundary conditions (\ref{eq7}) become
\begin{equation}\label{eq17}
\begin{array}{r}
\displaystyle
-\big( P_{sz}\boldsymbol \tau + P_{zz}\boldsymbol e_3 + S_z \boldsymbol n \big) + \big( M_{zs} \boldsymbol n \big)_{,s} = \boldsymbol f\qquad \mbox{and}\qquad
M_{zz} \boldsymbol n = \boldsymbol c\quad\mbox{on}\quad \mathcal{C}_0\,,
\vspace{4pt}\\
\displaystyle
\big( P_{sz}\boldsymbol \tau + P_{zz}\boldsymbol e_3 + S_z \boldsymbol n \big) - \big( M_{zs} \boldsymbol n \big)_{,s} = \boldsymbol f\qquad \mbox{and}\qquad 
M_{zz} \boldsymbol n = \boldsymbol c\quad\mbox{on}\quad \mathcal{C}_\ell\,.
\end{array}
\end{equation}

The problem of Saint-Venant consists in determining the equilibrium of cylindrical shells subjected to given loads $ \boldsymbol f $, $ \boldsymbol c $ on the end edges $ \mathcal{C}_0 $ and $ \mathcal{C}_\ell \,$, i.e. solving the equations (\ref{eq13})--(\ref{eq16}) with boundary conditions (\ref{eq17}). In the relaxed formulation of Saint-Venant's problem we replace the pointwise assignment of loads on the end edges by prescribing the resultant force and resultant moment of the given loads.

If we denote by $ \boldsymbol{\mathcal{R}}^0 = \mathcal{R}^0_i\boldsymbol e_i $ the resultant force and by $ \boldsymbol{\mathcal{M}}^0 = \mathcal{M}^0_i\boldsymbol e_i $ the resultant moment about $ O $ on the end edge $ \mathcal{C}_0 \,$, then we have
\begin{equation}\label{eq18}
\int_{\mathcal{C}_0} \boldsymbol f \,\mathrm{d}s = \boldsymbol{\mathcal{R}}^0,\qquad 
\int_{\mathcal{C}_0} (\boldsymbol r\times\boldsymbol f + \boldsymbol\nu\times \boldsymbol c) \,\mathrm{d}s = \boldsymbol{\mathcal{M}}^0.
\end{equation}
Inserting the expressions (\ref{eq17})$ _1 $ into (\ref{eq18}) and making some simplifications we obtain the following form of the resultant boundary conditions on $ \mathcal{C}_0 $
\begin{equation}\label{eq19}
\begin{array}{l}
\displaystyle\int_{\mathcal{C}_0} \big(P_{sz}\boldsymbol \tau + P_{zz}\boldsymbol e_3 + S_z\boldsymbol n \big) \,\mathrm{d}s = -\boldsymbol{\mathcal{R}}^0,  \vspace{6pt}\\
\displaystyle\int_{\mathcal{C}_0} \big[\boldsymbol r\times\big(P_{sz}\boldsymbol \tau + P_{zz}\boldsymbol e_3 + S_z\boldsymbol n \big) - M_{sz}\boldsymbol e_3 + M_{zz}\boldsymbol \tau
\big] \,\mathrm{d}s = -\boldsymbol{\mathcal{M}}^0.
\end{array}
\end{equation}
Since the boundary is $ \partial\mathcal S =\mathcal{C}_0\cup \mathcal{C}_\ell \, $, by virtue of the equilibrium conditions (\ref{eq8}) and the relations (\ref{eq18}) we deduce that the corresponding boundary conditions on the end edge $ \mathcal{C}_\ell $ 
are automatically satisfied and do not represent an additional condition to be imposed on the solution. For the sake of completeness, we record the end edge conditions on $ \mathcal{C}_\ell $ in the form
\begin{equation}\label{eq18bis}
\int_{\mathcal{C}_\ell} \boldsymbol f \,\mathrm{d}s = -\boldsymbol{\mathcal{R}}^0,\qquad 
\int_{\mathcal{C}_\ell} (\boldsymbol r\times\boldsymbol f + \boldsymbol\nu\times \boldsymbol c) \,\mathrm{d}s = -\boldsymbol{\mathcal{M}}^0.
\end{equation}

To conclude, the relaxed Saint-Venant's problem is: find the equilibrium displacement field $ \boldsymbol u $ which satisfies the governing equations (\ref{eq13})--(\ref{eq16}) and the resultant boundary conditions  (\ref{eq19}) on the end edge $ \mathcal{C}_0 \,$.

\section{The exact solution in closed form}
\label{Sect4}

In this section we determine an exact solution to the relaxed Saint-Venant's problem. To this aim, we employ the method elaborated by Ie\c san \cite{Iesan86,Iesan87} in the classical elasticity. 

For any displacement field $ \boldsymbol u $ we define the linear functionals 
$ \boldsymbol{\mathcal{R}}(\boldsymbol u) = \mathcal{R}_i(\boldsymbol u)\boldsymbol e_i $ and $ \boldsymbol{\mathcal{M}}(\boldsymbol u) = \mathcal{M}_i(\boldsymbol u)\boldsymbol e_i $ by the relations
\begin{equation}\label{eq21}
\begin{array}{l}
\boldsymbol{\mathcal{R}}(\boldsymbol u)= - \displaystyle\int_{\mathcal{C}_0} \big[P_{sz}(\boldsymbol u)\boldsymbol \tau + P_{zz}(\boldsymbol u)\boldsymbol e_3 + S_z(\boldsymbol u)\boldsymbol n  \big] \mathrm{d}s  ,  
\vspace{6pt}\\
\boldsymbol{\mathcal{M}}(\boldsymbol u) = - \displaystyle\int_{\mathcal{C}_0} \Big[\boldsymbol r\times\big(P_{sz}(\boldsymbol u)\boldsymbol \tau + P_{zz}(\boldsymbol u)\boldsymbol e_3 + S_z(\boldsymbol u)\boldsymbol n \big) - M_{sz}(\boldsymbol u)\boldsymbol e_3 + M_{zz}(\boldsymbol u)\boldsymbol \tau \Big] \mathrm{d}s ,
\end{array}
\end{equation}
which represent the corresponding resultant force and moment on the end edge $ \mathcal{C}_0 \,$. Then, the end edge conditions (\ref{eq19}) can be written in the compact form
\begin{equation}\label{eq22}
\boldsymbol{\mathcal{R}}(\boldsymbol u)=\boldsymbol{\mathcal{R}}^0,\qquad \boldsymbol{\mathcal{M}}(\boldsymbol u) = \boldsymbol{\mathcal{M}}^0. 
\end{equation}
The following result will be useful in the sequel.

\begin{theorem}\label{Th1}
	Let  $ \boldsymbol u $  be a displacement field which satisfies the governing equations (\ref{eq13})--(\ref{eq16}). If the field $ \frac{\partial\boldsymbol u  }{\partial z} \,$ has the regularity $ C^1(\bar{\mathcal S}) \cap C^2({\mathcal S}) $, then $ \frac{\partial\boldsymbol u  }{\partial z} \,$ is also a solution of the governing equations (\ref{eq13})--(\ref{eq16}) and it satisfies
	\begin{equation}\label{eq23}
	\boldsymbol{\mathcal{R}}\left(\frac{\partial \boldsymbol u}{\partial z}\right)=\boldsymbol{0},\qquad \boldsymbol{\mathcal{M}}\left(\frac{\partial \boldsymbol u}{\partial z}\right) = e_{\alpha\beta}\mathcal{R}_\beta(\boldsymbol u)\,\boldsymbol{e}_\alpha\,.
	\end{equation}
\end{theorem}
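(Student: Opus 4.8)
The plan is to exploit the fact that the governing system (\ref{eq13})--(\ref{eq16}) is invariant under translations in the axial variable $z$. Every coefficient occurring there is built only from $R(s)$, $\boldsymbol\tau(s)$, $\boldsymbol n(s)$ and the constant vector $\boldsymbol e_3$, hence is a function of $s$ alone, and each of the quantities $\epsilon_{\alpha\beta},\rho_{\alpha\beta},N_{\alpha\beta},M_{\alpha\beta},P_{\alpha\beta},S_\alpha$ is obtained from $\boldsymbol u$ by a linear differential operator whose coefficients depend on $s$ only. Writing $\boldsymbol v:=\partial\boldsymbol u/\partial z$, we therefore have $\epsilon_{\alpha\beta}(\boldsymbol v)=\partial_z\epsilon_{\alpha\beta}(\boldsymbol u)$, $\rho_{\alpha\beta}(\boldsymbol v)=\partial_z\rho_{\alpha\beta}(\boldsymbol u)$, and likewise $N_{\alpha\beta}(\boldsymbol v)=\partial_z N_{\alpha\beta}(\boldsymbol u)$, $M_{\alpha\beta}(\boldsymbol v)=\partial_z M_{\alpha\beta}(\boldsymbol u)$, $P_{\alpha\beta}(\boldsymbol v)=\partial_z P_{\alpha\beta}(\boldsymbol u)$ and $S_\alpha(\boldsymbol v)=\partial_z S_\alpha(\boldsymbol u)$. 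The regularity assumed for $\boldsymbol v$ is exactly what is needed to make $\boldsymbol v$ an admissible displacement field and to interchange $\partial_z$ with the $s$-differentiations occurring in these relations. With this, (\ref{eq13}) holds for $\boldsymbol v$ by construction, (\ref{eq15})--(\ref{eq16}) hold for $\boldsymbol v$ because they are differential identities with $z$-independent coefficients, and differentiating each of the three equilibrium equations (\ref{eq14}) in $z$ shows that $\boldsymbol v$ satisfies (\ref{eq14}) as well. This proves the first assertion.

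For the resultants I would first establish the shell analogue of the classical Saint-Venant transmission property: if $\boldsymbol u$ satisfies (\ref{eq13})--(\ref{eq16}) and $P_{sz},P_{zz},S_z,M_{sz},M_{zz}$ denote the fields associated with it, then the two vectors
\begin{gather*}
\boldsymbol F(z):=\int_{\mathcal C_z}\big(P_{sz}\boldsymbol\tau+P_{zz}\boldsymbol e_3+S_z\boldsymbol n\big)\,\mathrm ds,\\
\boldsymbol G(z):=\int_{\mathcal C_z}\big[\boldsymbol r\times\big(P_{sz}\boldsymbol\tau+P_{zz}\boldsymbol e_3+S_z\boldsymbol n\big)-M_{sz}\boldsymbol e_3+M_{zz}\boldsymbol\tau\big]\,\mathrm ds
\end{gather*}
are independent of $z$. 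To prove this I would differentiate in $z$, replace the $z$-derivatives of $P_{sz},P_{zz},S_z$ by means of the equilibrium equations (\ref{eq14}) (and, for $\boldsymbol G$, also those of $M_{sz},M_{zz}$ by means of (\ref{eq15})), and integrate by parts in $s$ along the closed cross-section curve $\mathcal C_z$, so that all total $s$-derivatives drop out by $\bar s$-periodicity. Using the Frenet-type relations (\ref{eq11}) (namely $\boldsymbol\tau_{,s}=-\tfrac1R\boldsymbol n$ and $\boldsymbol n_{,s}=\tfrac1R\boldsymbol\tau$), the identities $\boldsymbol\tau\times\boldsymbol e_3=\boldsymbol n$ and $\boldsymbol\tau\times\boldsymbol n=-\boldsymbol e_3$ that follow from (\ref{eq10}), and the constitutive relation $P_{zs}=P_{sz}+\tfrac1R M_{sz}$ obtained from (\ref{eq15})--(\ref{eq16}), one checks that the remaining curvature terms cancel in pairs, whence $\boldsymbol F'(z)=\boldsymbol 0$ and $\boldsymbol G'(z)=\boldsymbol 0$. (Alternatively, this constancy follows from the global equilibrium conditions (\ref{eq8}) applied to the subcylinder $0\le x_3\le z$.)

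Granting this, (\ref{eq23}) follows at once. Since $P_{sz}(\boldsymbol v)\boldsymbol\tau+P_{zz}(\boldsymbol v)\boldsymbol e_3+S_z(\boldsymbol v)\boldsymbol n=\partial_z\big(P_{sz}\boldsymbol\tau+P_{zz}\boldsymbol e_3+S_z\boldsymbol n\big)$ by the first paragraph, the definition (\ref{eq21}) of $\boldsymbol{\mathcal{R}}$ together with the definition of $\boldsymbol F$ gives $\boldsymbol{\mathcal{R}}(\boldsymbol v)=-\boldsymbol F'(0)=\boldsymbol 0$ (as $\boldsymbol F$ is constant). For the moment, the same identification together with $\boldsymbol r_{,z}=\boldsymbol e_3$ (recall that $\boldsymbol r$ is the fixed position vector defined in (\ref{eq9})) gives, upon differentiating $\boldsymbol G$,
\[
\boldsymbol G'(z)=\boldsymbol e_3\times\boldsymbol F(z)+\int_{\mathcal C_z}\big[\boldsymbol r\times\big(P_{sz}(\boldsymbol v)\boldsymbol\tau+P_{zz}(\boldsymbol v)\boldsymbol e_3+S_z(\boldsymbol v)\boldsymbol n\big)-M_{sz}(\boldsymbol v)\boldsymbol e_3+M_{zz}(\boldsymbol v)\boldsymbol\tau\big]\,\mathrm ds .
\]
At $z=0$ the integral here is precisely $-\boldsymbol{\mathcal{M}}(\boldsymbol v)$ by (\ref{eq21}), while $\boldsymbol F(0)=-\boldsymbol{\mathcal{R}}(\boldsymbol u)$; since $\boldsymbol G'(0)=\boldsymbol 0$ this forces $\boldsymbol{\mathcal{M}}(\boldsymbol v)=\boldsymbol e_3\times\boldsymbol F(0)=-\boldsymbol e_3\times\boldsymbol{\mathcal{R}}(\boldsymbol u)$. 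Finally $\boldsymbol e_3\times\boldsymbol e_\beta=-e_{\alpha\beta}\boldsymbol e_\alpha$ for $\beta\in\{1,2\}$ and $\boldsymbol e_3\times\boldsymbol e_3=\boldsymbol 0$, so $-\boldsymbol e_3\times\boldsymbol{\mathcal{R}}(\boldsymbol u)=e_{\alpha\beta}\mathcal{R}_\beta(\boldsymbol u)\,\boldsymbol e_\alpha$, which together with $\boldsymbol{\mathcal{R}}(\boldsymbol v)=\boldsymbol 0$ is (\ref{eq23}).

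The step I expect to be the main obstacle is the proof that $\boldsymbol F'(z)=\boldsymbol G'(z)=\boldsymbol 0$: one has to keep careful track of which contributions are genuine total $s$-derivatives -- and therefore integrate to zero around the closed cross-section -- and which are curvature terms that must cancel against one another, and for $\boldsymbol G'$ the cancellation closes only after repeated use of the constitutive relations (\ref{eq15})--(\ref{eq16}) in combination with (\ref{eq11}). The translational-invariance argument of the first paragraph and the concluding computation with the two-dimensional alternator are, by comparison, routine.
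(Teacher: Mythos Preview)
Your proposal is correct and rests on the same mechanism as the paper's proof: the $z$-independence of all coefficients gives that $\partial_z\boldsymbol u$ satisfies the system, and the resultant identities follow by trading $z$-derivatives for $s$-derivatives via the equilibrium equations (\ref{eq14}) and integrating total $s$-derivatives to zero around the closed cross-section. The paper performs this computation directly and componentwise on $\mathcal R$, $\mathcal M_3$ and $\mathcal M_\alpha$ at $z=0$; you instead package it as the constancy of the transmission vectors $\boldsymbol F(z)$, $\boldsymbol G(z)$ and then read off $\boldsymbol{\mathcal M}(\partial_z\boldsymbol u)$ from the product rule on $\boldsymbol r$, which is a mild but pleasant reorganization since the term $e_{\alpha\beta}\mathcal R_\beta\boldsymbol e_\alpha$ emerges transparently as $-\boldsymbol e_3\times\boldsymbol{\mathcal R}(\boldsymbol u)$ rather than as the leftover of a componentwise integration by parts.
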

\begin{proof}
	In view of the linearity of the equations and the fact that all the coefficients in the relations (\ref{eq13})--(\ref{eq16}) are independent of $ z $, we deduce by differentiation that $ \frac{\partial\boldsymbol u  }{\partial z} \,$ also satisfies the field equations (\ref{eq13})--(\ref{eq16}).
	
	Using the equilibrium equations (\ref{eq14}) and the relations (\ref{eq11}) we get
$$
\begin{array}{rcl}
\displaystyle
\boldsymbol{\mathcal{R}}\left(\frac{\partial \boldsymbol u}{\partial z}\right) & = & - \displaystyle\int_{\mathcal{C}_0} \frac{\partial }{\partial z} \big[P_{sz}(\boldsymbol u)\boldsymbol \tau + P_{zz}(\boldsymbol u)\boldsymbol e_3 + S_z(\boldsymbol u)\boldsymbol n  \big] \,\mathrm{d}s
\vspace{4pt}\\
 & = & \displaystyle\int_{\mathcal{C}_0}  \Big[\big( P_{ss,s} +  \frac{1}{R}\, S_s \big)\boldsymbol \tau +  P_{zs,s} \,\boldsymbol e_3 +\big( S_{s,s} - \frac{1}{R}\,P_{ss} \big)\boldsymbol n  \Big] \,\mathrm{d}s
 \vspace{4pt}\\
 & = &   \displaystyle\int_{\mathcal{C}_0} \frac{\partial }{\partial s} \big(P_{ss}\boldsymbol \tau + P_{zs}\boldsymbol e_3 + S_s\boldsymbol n \big) \,\mathrm{d}s = \boldsymbol 0.
\end{array}
$$
	Analogously, we obtain
$$
\begin{array}{l}
\displaystyle
{\mathcal{M}}_3\left(\frac{\partial \boldsymbol u}{\partial z}\right)   =   \displaystyle\int_{\mathcal{C}_0}  \big[e_{\alpha\beta}x_\alpha^\prime x_\beta P_{sz,z}(\boldsymbol u) + x_\alpha x_\alpha^\prime S_{z,z}(\boldsymbol u)+ M_{sz,z}(\boldsymbol u)  \big] \,\mathrm{d}s
\vspace{4pt}\\
\quad\;  =  -\displaystyle\int_{\mathcal{C}_0}  \Big[ e_{\alpha\beta}x_\alpha^\prime x_\beta\big(  P_{ss,s} +  \frac{1}{R} S_s \big) + x_\alpha x_\alpha^\prime \big( S_{s,s} - \frac{1}{R}P_{ss} \big) + \big(S_s+M_{ss,s}\big)  \Big] \mathrm{d}s
\vspace{4pt}\\
\quad\;  =  -  \displaystyle\int_{\mathcal{C}_0} \frac{\partial }{\partial s} \big(e_{\alpha\beta}x_\alpha^\prime x_\beta  P_{ss}   + x_\alpha x_\alpha^\prime  S_{s}   +  M_{ss}\big) \,\mathrm{d}s =   0
\end{array}
$$
	and
$$
\begin{array}{l}
\displaystyle
{\mathcal{M}}_\alpha\left(\frac{\partial \boldsymbol u}{\partial z}\right)  =  - \displaystyle\int_{\mathcal{C}_0} \frac{\partial }{\partial z} \big[e_{\alpha\beta}x_\beta P_{zz}(\boldsymbol u) +x_\alpha^\prime M_{zz}(\boldsymbol u) \big] \,\mathrm{d}s
\vspace{4pt}\\
\qquad =  \displaystyle\int_{\mathcal{C}_0}  \big[e_{\alpha\beta}x_\beta P_{zs,s} +x_\alpha^\prime \big(S_z+ M_{sz,s}\big) \big] \,\mathrm{d}s
\vspace{4pt}\\
\qquad =    -\displaystyle\int_{\mathcal{C}_0}  \Big[ e_{\alpha\beta} \big(x_\beta^\prime P_{sz}  + e_{\beta\gamma}x_\gamma^\prime S_z \big) + \big( \frac{e_{\alpha\beta} x_\beta^\prime}{R}+x_\alpha^{\prime\prime} \big)
M_{sz} \Big] \mathrm{d}s = e_{\alpha\beta}\mathcal{R}_\beta(\boldsymbol u),
\end{array}
$$
since $ x_\alpha^{\prime\prime}= e_{\beta\alpha}x_\beta^{\prime}R^{-1} $.
	\end{proof}
\medskip

By virtue of the linearity of the theory, we can decompose the relaxed Saint-Venant's problem into two problems: the extension-bending-torsion problem (characterized by the resultant loads $ \boldsymbol{\mathcal{R}}^0 = \mathcal{R}^0_3\boldsymbol e_3 $  and $ \boldsymbol{\mathcal{M}}^0 = \mathcal{M}^0_i\boldsymbol e_i $) and the flexure problem (characterized by the resultants of the form $ \boldsymbol{\mathcal{R}}^0 = \mathcal{R}^0_\alpha\boldsymbol e_\alpha $  and $ \boldsymbol{\mathcal{M}}^0 = \boldsymbol 0 $).

\subsection{Extension-bending-torsion problem}
\label{Sect4.1}

The next theorem gives the exact closed-form solution.

\begin{theorem}\label{Th2}
Consider the extension-bending-torsion problem for cylindrical shells: solve the field equations (\ref{eq13})--(\ref{eq16}) under the end edge conditions 
	\begin{equation}\label{eq24}
\boldsymbol{\mathcal{R}}(\boldsymbol u)= \mathcal{R}_3^0\,\boldsymbol{e}_3\,,\qquad \boldsymbol{\mathcal{M}}(\boldsymbol u) = \mathcal{M}_i^0\,\boldsymbol{e}_i\,,
	\end{equation}
	for given resultants $ \mathcal{R}^0_3\, $, $ \mathcal{M}^0_i $ ($ i=1,2,3 $). This problem admits the following solution
	\begin{equation}\label{eq25}
	\begin{array}{l}
	\boldsymbol u = - \big[\,\frac12 A_\alpha(z^2 - \nu x_\beta x_\beta) + \nu(A_\beta x_\beta+ \bar{A}_3)x_\alpha + K z\, e_{\alpha\beta}x_\beta \big]\boldsymbol e_\alpha 
	\vspace{6pt}\\
	\qquad
	+ \big[z(A_\alpha x_\alpha+ \bar{A}_3)  + K \varphi(s) \big]\boldsymbol e_3
	 +\displaystyle\frac{1}{C}\Big(
	s \boldsymbol B\times\boldsymbol e_3 - \int_{0}^{s} \frac{1}{R}(\boldsymbol B\cdot \hat{\boldsymbol r})\boldsymbol{\tau}\,\mathrm{d}s 
	\Big)
	\vspace{6pt}\\
	\displaystyle
	\qquad - \int_{0}^{s} \boldsymbol n \int_{0}^{s} \big[\frac{\nu}{R}\boldsymbol A + \Big( \frac{1}{D}+ \frac{1}{CR^2}\Big)\boldsymbol B  \big]\cdot\hat{\boldsymbol r} \,\mathrm{d}s\,\mathrm{d}s,
	\end{array}
	\end{equation}
	where 
	\begin{equation}\label{eq26}
\hat{\boldsymbol r}= x_\alpha\boldsymbol e_\alpha + \bar{s}\,\boldsymbol e_3\,,\qquad \boldsymbol A=A_i\boldsymbol e_i\,,\qquad \boldsymbol B=B_i\boldsymbol e_i\,,\qquad \bar{A}_3= A_3\bar{s}
	\end{equation}
	and $ A_i\, $, $ B_i $ ($ i=1,2,3 $) and $ K $ are constants. The constants $ A_\alpha $ and $ \bar{A}_3 $ represent measures of curvature and stretch, respectively, which are given in terms of the resultants $ \mathcal{M}^0_\alpha $ and $ \mathcal{R}^0_3 $ by the relations (\ref{eq44-45}). Further, the coefficients $ B_i $ are determined in terms of $ A_i $ through the system of linear algebraic equations (\ref{eq41}). The constant $ K $ is a measure of twist of the cylindrical shell, which is given by the twist-torque relation
	\begin{equation}\label{eq27}
	K=-\frac{\mathcal{M}^0_3}{2(1-\nu)}\left( C\,\frac{\mathcal{A}^2}{\bar{s}} + D\bar{s}\right)^{-1},
	\end{equation}
	while $ \varphi(s) $ is a torsion function having the expression 
	\begin{equation}\label{eq28}
	\varphi(s) = \frac{2\mathcal{A}}{\bar{s}}\,s - \int_0^{s} \boldsymbol r\cdot\boldsymbol n\, \mathrm{d}s,
	\end{equation}
	where $ \displaystyle\mathcal{A} = \frac12 \int_0^{\bar{s}} \boldsymbol r\cdot\boldsymbol n\, \mathrm{d}s $ is the area bounded by the closed cross-section curve $ \mathcal{C}_0 \,$.
\end{theorem}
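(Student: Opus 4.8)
The plan is to verify the claim by direct substitution, using the linearity of the governing equations to split the displacement (\ref{eq25}) into a torsion part $\boldsymbol u_K=-Kz\,e_{\alpha\beta}x_\beta\,\boldsymbol e_\alpha+K\varphi(s)\,\boldsymbol e_3$ (the terms carrying the factor $K$) and an extension--bending part $\boldsymbol u_{AB}$ (the terms depending on $A_i$ and $B_i$), treating each separately. For each part the same five steps are carried out: compute the derivatives $\boldsymbol u_{,s},\boldsymbol u_{,z},\boldsymbol u_{,ss},\boldsymbol u_{,sz},\boldsymbol u_{,zz}$ with the help of (\ref{eq11}) and the identities $\boldsymbol\tau=x_\alpha'\boldsymbol e_\alpha$, $\boldsymbol n=e_{\alpha\beta}x_\beta'\boldsymbol e_\alpha$, $x_\alpha''=e_{\beta\alpha}x_\beta'/R$; insert them into (\ref{eq13}) to obtain the six strain measures; form $N_{\alpha\beta},M_{\alpha\beta},P_{\alpha\beta},S_\alpha$ via (\ref{eq16}) and (\ref{eq5}); check the three equilibrium equations (\ref{eq14}); and finally evaluate the resultant functionals (\ref{eq21}), match them to the prescribed data, and so fix the remaining constants, recovering (\ref{eq41}), (\ref{eq44-45}) and (\ref{eq27}). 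The regularity needed to apply Theorem~\ref{Th1} holds because $\partial_z\boldsymbol u$ is a polynomial in $z$ with coefficients of class $C^2$ in $s$.

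The torsion part is short and self-contained. From (\ref{eq28}) one gets $\varphi'(s)=2\mathcal A/\bar s-\boldsymbol r\cdot\boldsymbol n$, hence $\int_0^{\bar s}\varphi'\,\mathrm{d}s=0$, so $\varphi$ is single-valued on the closed cross-section curve and $\boldsymbol u_K$ is a genuine displacement on $\mathcal S$ (this is the purpose of the term $2\mathcal A\,s/\bar s$ in (\ref{eq28})). A direct computation gives $\epsilon_{ss}=\epsilon_{zz}=0$, $\epsilon_{sz}=K\mathcal A/\bar s$, $\rho_{ss}=\rho_{zz}=0$, $\rho_{sz}=-K$, so by (\ref{eq16}) the only non-zero resultants are the constants $N_{sz}=C(1-\nu)K\mathcal A/\bar s$ and $M_{sz}=-D(1-\nu)K$; then $S_s=S_z=0$ and $P_{ss}=P_{zz}=0$, and the equilibrium equations (\ref{eq14}) hold trivially. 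Evaluating (\ref{eq21}) and using $\int_0^{\bar s}x_\alpha'\,\mathrm{d}s=0$, $\int_0^{\bar s}\boldsymbol r\cdot\boldsymbol n\,\mathrm{d}s=2\mathcal A$ and $\int_0^{\bar s}(\boldsymbol r\cdot\boldsymbol n)/R\,\mathrm{d}s=\bar s$ (the last from $x_\alpha'/R=e_{\alpha\beta}x_\beta''$ together with $x_\alpha'x_\alpha'=1$), one finds $\boldsymbol{\mathcal R}(\boldsymbol u_K)=\boldsymbol 0$, $\mathcal M_\alpha(\boldsymbol u_K)=0$ and $\mathcal M_3(\boldsymbol u_K)=-2(1-\nu)K\big(C\mathcal A^2/\bar s+D\bar s\big)$; imposing $\mathcal M_3(\boldsymbol u_K)=\mathcal M_3^0$ then gives (\ref{eq27}).

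For the extension--bending part the guiding observation is that $\epsilon_{zz}=\boldsymbol A\cdot\hat{\boldsymbol r}=A_\alpha x_\alpha+\bar A_3$ is affine in the cross-section coordinates — the classical stretch-plus-curvature field — while the remaining $s$-dependent terms of (\ref{eq25}), namely $C^{-1}\big(s\,\boldsymbol B\times\boldsymbol e_3-\int_0^s R^{-1}(\boldsymbol B\cdot\hat{\boldsymbol r})\boldsymbol\tau\,\mathrm{d}s\big)$ and the nested integral $-\int_0^s\boldsymbol n\int_0^s[\nu R^{-1}\boldsymbol A+(D^{-1}+C^{-1}R^{-2})\boldsymbol B]\cdot\hat{\boldsymbol r}\,\mathrm{d}s\,\mathrm{d}s$, are precisely the particular solutions of the ordinary differential equations in $s$ forced by the equilibrium equations: the $C^{-1}$-term is arranged so that $P_{zs}$ and $P_{sz}$ satisfy (\ref{eq14})$_{1,2}$, and the double integral so that $S_{s,s}$ — hence $M_{ss,ss}$ — balances $R^{-1}P_{ss}$ in (\ref{eq14})$_3$. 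Pushing this through leaves the compatibility requirement that $\boldsymbol B$ solve the linear algebraic system (\ref{eq41}) in terms of $\boldsymbol A$. Evaluating (\ref{eq21}) for $\boldsymbol u_{AB}$ — every total $s$-derivative integrates to zero around the closed curve $\mathcal C_0$ — then yields $\mathcal R_\alpha=0$, $\mathcal M_3=0$, while $\mathcal R_3$ and $\mathcal M_\alpha$ come out as linear functions of $\bar A_3$ and $A_\alpha$ (after inserting $\boldsymbol B=\boldsymbol B(\boldsymbol A)$), and inverting these relations gives (\ref{eq44-45}).

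I expect the main obstacle to be the bookkeeping in the equilibrium check, that is, confirming that the two integral terms in (\ref{eq25}) really cancel the inhomogeneous parts of (\ref{eq14}) and lead to (\ref{eq41}). The clearest way to see why they must have exactly that shape is to use Theorem~\ref{Th1}: since $\partial_z^2\boldsymbol u=-A_\alpha\boldsymbol e_\alpha$ is a rigid translation — a trivial solution — the field $\partial_z\boldsymbol u$ is a solution linear in $z$ whose resultants are, by (\ref{eq23}), forced to be consistent with $\boldsymbol{\mathcal R}(\boldsymbol u)=\mathcal R_3^0\boldsymbol e_3$; one then reconstructs $\boldsymbol u$ by integrating twice in $z$, at each stage solving an ordinary differential equation in $s$ for the $s$-dependent correction, and those equations are exactly the ones the integral terms in (\ref{eq25}) solve. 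Everything else — obtaining (\ref{eq41}) and inverting for (\ref{eq44-45}) — is routine linear algebra once the stress resultants have been written out explicitly.
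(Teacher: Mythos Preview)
Your verification-by-substitution approach is valid and will work, but it is genuinely different from the paper's proof, which is \emph{constructive}. The paper uses Theorem~\ref{Th1} not merely as motivation but as the starting point: since $\partial_z\boldsymbol u$ must carry vanishing resultants, one posits that $\partial_z\boldsymbol u$ is a rigid displacement, integrates once in $z$ to obtain the ansatz $\boldsymbol u=-(\tfrac12z^2A_\alpha+Kz\,e_{\alpha\beta}x_\beta)\boldsymbol e_\alpha+z(\boldsymbol A\cdot\hat{\boldsymbol r})\boldsymbol e_3+\boldsymbol w(s)$ with $\boldsymbol w$ unknown, and then \emph{solves} the equilibrium equations (\ref{eq14}) as ODEs in $s$ for $\boldsymbol w$. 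The constants $B_i$ appear as integration constants ((\ref{eq14})$_{1,3}$ combine into $(-P_{ss}\boldsymbol n+S_s\boldsymbol\tau)'=\boldsymbol 0$, yielding $B_\alpha$; one further integration yields $B_3$), and (\ref{eq25}) is \emph{derived}. Your route takes (\ref{eq25}) as given and checks; the paper's explains where every term and every constant comes from, at the cost of a less direct argument.

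Two points in your outline need sharpening. First, the $C^{-1}$-term and the nested integral in (\ref{eq25}) are both in-plane and $z$-independent; for the extension--bending part one has $\epsilon_{sz}=0$, $P_{zs}=0$, and (\ref{eq14})$_2$ holds trivially since $P_{zz}$ is $z$-independent. Those two integral terms are tailored to (\ref{eq14})$_{1}$ and (\ref{eq14})$_{3}$, not (\ref{eq14})$_{1,2}$ as you wrote. Second, and more important for the logic: in the verification route the equilibrium equations hold for \emph{arbitrary} $B_i$; the system (\ref{eq41}) does not come from equilibrium but from the single-valuedness conditions $\boldsymbol u(0,z)=\boldsymbol u(\bar s,z)$ and $\boldsymbol u_{,s}(0,z)=\boldsymbol u_{,s}(\bar s,z)$ on the closed cross-section curve --- exactly what you did for $\varphi$, but now applied to the in-plane part $w_\alpha(s)\boldsymbol e_\alpha$. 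Replace your vague ``compatibility requirement'' by this explicit periodicity argument and the proof goes through.
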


\begin{proof}
We search for a solution $ \boldsymbol u $  satisfying the equations (\ref{eq13})--(\ref{eq16}) and the conditions (\ref{eq24}), such that $ \frac{\partial \boldsymbol u}{\partial z} \in C^1(\bar{\mathcal S}) \cap C^2({\mathcal S}) $. According to Theorem \ref{Th1}, the field $ \frac{\partial\boldsymbol u  }{\partial z} \,$ verifies then the equilibrium equations (\ref{eq13})--(\ref{eq16})  and the conditions (\ref{eq23}) in the form
$$
\boldsymbol{\mathcal{R}}\left(\frac{\partial \boldsymbol u}{\partial z}\right)=\boldsymbol{0},\qquad \boldsymbol{\mathcal{M}}\left(\frac{\partial \boldsymbol u}{\partial z}\right) = \boldsymbol{0}.
$$
These relations suggest to search for a solution $ \boldsymbol u $ such that the derivative  $ \frac{\partial\boldsymbol u  }{\partial z} \,$ is a rigid body displacement field, i.e.
\begin{equation}\label{eq28,5}
\frac{\partial \boldsymbol u}{\partial z} = \tilde{\boldsymbol c} + \boldsymbol d \times \boldsymbol r\,,
\end{equation}
where $ \tilde{\boldsymbol c}=\tilde{c}_i\boldsymbol e_i $ and $ \boldsymbol d=d_i\boldsymbol e_i $ are constant vectors. It follows
$$
\boldsymbol u = z\,\tilde{\boldsymbol c} +  \boldsymbol d \times \big(x_\alpha z \,\boldsymbol e_\alpha +\frac12\,z^2\boldsymbol e_3\big)+ \tilde{\boldsymbol w}(s),
$$
where $ \tilde{\boldsymbol w}(s) $ is an arbitrary function of class $ C^2[0,\bar{s}] $. If we introduce the notations $ A_\alpha=e_{\beta\alpha}d_\beta\, $, $ \bar{A}_3=A_3\bar{s}=\tilde{c}_3\, $, $ K=d_3\, $, and $ \boldsymbol w(s)=\tilde{\boldsymbol w}(s) + (\tilde{c}_\alpha x_\alpha)\boldsymbol e_3 \,$, then we can writte the last relation in the form
\begin{equation}\label{eq29}
\boldsymbol u = -\big(\frac12\,z^2 A_\alpha  + Kz\,e_{\alpha\beta}x_\beta \big) \boldsymbol e_\alpha + z(\boldsymbol A\cdot\hat{\boldsymbol r})\,\boldsymbol e_3 + \boldsymbol w(s),
\end{equation}
where $ \boldsymbol A = A_i\boldsymbol e_i$ and $ \hat{\boldsymbol r}= x_\alpha\boldsymbol e_\alpha + \bar{s}\,\boldsymbol e_3\, $. In what follows, we determine the unknown function $ \boldsymbol w(s)=w_i(s)\boldsymbol e_i $ such that the displacement field  (\ref{eq29}) satisfies the equilibrium equations (\ref{eq14}). The strain measures (\ref{eq13}) corresponding to the displacement field  (\ref{eq29}) are
\begin{equation}\label{eq30}
\begin{array}{c}
\epsilon_{ss}= \boldsymbol w^\prime\cdot \boldsymbol\tau\,, \qquad 
\epsilon_{sz}= \epsilon_{zs}= \frac12 (K\boldsymbol r\cdot \boldsymbol n + w^\prime_3),\qquad 
\epsilon_{zz}=  \boldsymbol A\cdot\hat{\boldsymbol r}\,,
\vspace{4pt}\\
\rho_{ss}= \boldsymbol w^{\prime\prime}\cdot \boldsymbol n\,, \qquad 
\rho_{sz}= \rho_{zs}= -K,\qquad 
\rho_{zz}=  -\boldsymbol A\cdot \boldsymbol n\,.
\end{array}
\end{equation}
where we denote by $ (\cdot)^\prime \equiv \frac{\mathrm{d}}{\mathrm{d}s}(\cdot) $ the derivative with respect to $ s. $ The stress tensors (\ref{eq16}) have the form
\begin{equation}\label{eq31}
\begin{array}{ll}
 N_{ss}  = C (\nu\boldsymbol A\cdot\hat{\boldsymbol r}+ \boldsymbol w^\prime\cdot \boldsymbol\tau) ,\qquad &
N_{sz} = N_{zs} =\frac12 C(1-\nu) (K\boldsymbol r\cdot \boldsymbol n + w^\prime_3) , 
\vspace{4pt}\\
 N_{zz}  = C (\boldsymbol A\cdot\hat{\boldsymbol r}+ \nu\boldsymbol w^\prime\cdot \boldsymbol\tau) ,
\qquad &
M_{ss}  = D(-\nu\boldsymbol A\cdot \boldsymbol n + \boldsymbol w^{\prime\prime}\cdot \boldsymbol n),
\vspace{4pt}\\
 M_{sz} = M_{zs} = - D(1-\nu) K , \qquad &
M_{zz}  = D(-\boldsymbol A\cdot \boldsymbol n + \nu \boldsymbol w^{\prime\prime}\cdot \boldsymbol n)
\end{array}
\end{equation}
and the relations (\ref{eq15}) yield
\begin{equation}\label{eq32}
\begin{array}{l}
P_{ss}  = \nu\boldsymbol A\cdot( C\hat{\boldsymbol r} + D\,\frac{1}{R}\,\boldsymbol n) + C \boldsymbol w^\prime\cdot \boldsymbol\tau - D\,\frac{1}{R}\,
\boldsymbol w^{\prime\prime}\cdot \boldsymbol n,
\vspace{4pt}\\
P_{sz} = K(1-\nu)(\frac12 C \boldsymbol r\cdot \boldsymbol n + D\,\frac{1}{R} ) + \frac12 C(1-\nu) w^\prime_3 \,, 
\vspace{4pt}\\
P_{zs}  = \frac12 C(1-\nu) (K\boldsymbol r\cdot \boldsymbol n + w^\prime_3),
\qquad
P_{zz} = C (\boldsymbol A\cdot\hat{\boldsymbol r}+ \nu\boldsymbol w^\prime\cdot \boldsymbol\tau) , \vspace{4pt}\\
S_{s} =  D(\nu\boldsymbol A\cdot \boldsymbol n  -\boldsymbol w^{\prime\prime}\cdot \boldsymbol n)^\prime,\qquad\qquad
S_{z}  = 0.
\end{array}
\end{equation}
We note that all these tensor components are independent of $ z $.

Using (\ref{eq32}) we deduce that the equilibrium equations (\ref{eq14})$_2 $ reduces to 
$$
\left( K\,\boldsymbol r\cdot\boldsymbol n + w_3^\prime (s) \,\right)^\prime =0.
$$
Integrating this equations and imposing the continuity condition $ w_3(0)=w_3(\bar{s}) $ (see (\ref{eq39})), we obtain the solution
\begin{equation}\label{eq33}
w_3(s) = K\Big( \frac{2\mathcal{A}}{\bar{s}}\,s - \int_0^{s} \boldsymbol r\cdot\boldsymbol n\, \mathrm{d}s  \Big),\qquad\mbox{i.e.}\qquad w_3(s)=K\,\varphi(s),
\end{equation}
in accordance with notation (\ref{eq28}). 
The remaining two equilibrium equations (\ref{eq14})$_{1,3} $ can be put in the vectorial form
$$
\begin{array}{c}
\big( -P_{ss}\boldsymbol n + S_s \boldsymbol \tau\big)^\prime = \boldsymbol 0, \qquad \mbox{i.e.}
\vspace{6pt}\\
 -P_{ss}\boldsymbol n + S_s \boldsymbol \tau = B_\alpha\boldsymbol e_\alpha\,,\quad
  \mbox{or equivalently},
\end{array}
$$
\begin{equation}\label{eq34}
P_{ss} = -(B_\alpha\boldsymbol e_\alpha)\cdot \boldsymbol n
\qquad\mbox{and}\qquad 
S_s = (B_\alpha\boldsymbol e_\alpha)\cdot \boldsymbol\tau\,,
\end{equation}
where $ B_1\, $, $ B_2 $ are some arbitrary constants. Using (\ref{eq32}), from (\ref{eq34}) we obtain
\begin{equation}\label{eq35}
\begin{array}{l}
C(\nu\boldsymbol A\cdot\hat{\boldsymbol{r}} + \boldsymbol w^\prime\cdot \boldsymbol\tau) + D\,\frac{1}{R}(\nu\boldsymbol A\cdot\boldsymbol n - \boldsymbol w^{\prime\prime}\cdot \boldsymbol n) = -(B_\alpha\boldsymbol e_\alpha)\cdot \boldsymbol n\,,
\vspace{6pt}\\
D (\nu\boldsymbol A\cdot\boldsymbol n - \boldsymbol w^{\prime\prime}\cdot \boldsymbol n) =B_\alpha x_\alpha + B_3 \bar{s}\,,
\end{array}
\end{equation}
where $ B_3 $ is an arbitrary constant. Introducing the notation $  \boldsymbol B = B_i\boldsymbol e_i $ we can write the system of equations (\ref{eq35}) in the simpler form
\begin{equation}\label{eq36}
\begin{array}{rcl}
\displaystyle
\boldsymbol w^\prime\cdot \boldsymbol\tau & = & -\nu\boldsymbol A\cdot\hat{\boldsymbol{r}} -\frac{1}{C}\,\boldsymbol B\cdot\big(\boldsymbol n + \frac{1}{R}\,\hat{\boldsymbol{r}}\big)
,
\vspace{6pt}\\
\displaystyle
\boldsymbol w^{\prime\prime}\cdot \boldsymbol n & = & \nu\boldsymbol A\cdot\boldsymbol n - \frac{1}{D}\,\boldsymbol B\cdot\hat{\boldsymbol{r}}.
\end{array}
\end{equation}
Using (\ref{eq36}), we can integrate the relation $ (\boldsymbol w^\prime\cdot\boldsymbol n)^\prime = \boldsymbol w^{\prime\prime}\cdot\boldsymbol n + \boldsymbol w^\prime\cdot\boldsymbol \tau\,\frac{1}{R}\, $ to deduce
\begin{equation}\label{eq37}
\boldsymbol w^\prime\cdot\boldsymbol n = \nu\boldsymbol A\cdot\Big( e_{\alpha\beta}x_\beta\boldsymbol e_\alpha - \int_{0}^{s} \frac{1}{R}\,\hat{\boldsymbol r}\,\mathrm{d}s  \Big) +
\boldsymbol B\cdot\Big[ \frac{1}{C}\,\boldsymbol{\tau} - \int_{0}^{s} \Big( \frac{1}{D}+ \frac{1}{CR^2}\Big)\hat{\boldsymbol r}\,\mathrm{d}s  \Big],
\end{equation}
where we neglect some additive constants which represent rigid body displacement fields. The equations (\ref{eq36})$ _1 $ and (\ref{eq37}) determine the field $ \boldsymbol w^\prime(s) $. If we integrate once again with respect to $ s $, we obtain the following solution for $ \boldsymbol w(s)\, $:
\begin{equation}\label{eq38}
\begin{array}{l}
\displaystyle 
w_\alpha(s)\,\boldsymbol e_\alpha =  \frac12 \nu ( x_\gamma x_\gamma)A_\alpha\boldsymbol e_\alpha - \nu(\boldsymbol A\cdot \hat{\boldsymbol r})x_\alpha \boldsymbol e_\alpha
+ \frac{1}{C}\Big[
s \boldsymbol B\times\boldsymbol e_3 
- \int_{0}^{s} \frac{1}{R}(\boldsymbol B\cdot \hat{\boldsymbol r})\boldsymbol{\tau}\,\mathrm{d}s 
\Big]
\vspace{6pt}\\
\displaystyle \qquad\qquad\quad
 - \int_{0}^{s} \boldsymbol n \int_{0}^{s} \big[\frac{\nu}{R}\boldsymbol A + \Big( \frac{1}{D}+ \frac{1}{CR^2}\Big)\boldsymbol B  \big]\cdot\hat{\boldsymbol r} \,\mathrm{d}s\,\mathrm{d}s.
\end{array}
\end{equation}
Inserting the relations (\ref{eq33}) and (\ref{eq38}) into equation (\ref{eq29}) we derive that the solution $ \boldsymbol u $ has indeed the expression (\ref{eq25}), which was announced in the statement of the theorem. This solution must satisfy some continuity conditions for $ s=0,\bar{s}\, $, namely
\begin{equation}\label{eq39}
\boldsymbol u(0,z)= \boldsymbol u(\bar{s},z),\qquad  \boldsymbol u_{,s}(0,z)= \boldsymbol u_{,s}(\bar{s},z),\qquad  \boldsymbol u_{,ss}(0,z)= \boldsymbol u_{,ss}(\bar{s},z),
\end{equation}
for all $ z\in [0,\ell] $. In view of the integrations that we performed to obtain relations (\ref{eq37}) and (\ref{eq38}), the conditions (\ref{eq39}) reduce to
\begin{equation}\label{eq40}
\boldsymbol w^\prime\cdot\boldsymbol n(0)= \boldsymbol w^\prime\cdot\boldsymbol n(\bar{s}) 
\qquad\mbox{and}\qquad 
w_\alpha(0)=w_\alpha(\bar{s}),\quad \alpha=1,2.
\end{equation}
By virtue of (\ref{eq37}) and (\ref{eq38}), the  last relations (\ref{eq40}) can be written in the form
\begin{equation}\label{eq41}
\begin{array}{l}
\displaystyle\nu\boldsymbol A\cdot\int_{0}^{\bar{s}} \frac{1}{R}\,\hat{\boldsymbol r}\,\mathrm{d}s+ 
\boldsymbol B\cdot\int_{0}^{\bar{s}} \Big(\frac{1}{D}+\frac{1}{CR^2}\Big)\hat{\boldsymbol r}\,\mathrm{d}s = 0,
\vspace{6pt}\\
\displaystyle\nu\boldsymbol A\cdot\int_{0}^{\bar{s}} \frac{x_\alpha}{R}\,\hat{\boldsymbol r}\,\mathrm{d}s+ 
\boldsymbol B\cdot\int_{0}^{\bar{s}} \Big[x_\alpha\Big(\frac{1}{D}+\frac{1}{CR^2}\Big)\hat{\boldsymbol r} + \frac{1}{C} \big( \boldsymbol e_\alpha+ x_\alpha^\prime \boldsymbol \tau\big)\Big]\,\mathrm{d}s = 0.
\end{array}
\end{equation}
From the system of three linear algebaric equations (\ref{eq41}) one can determine the constants $ B_i $ in terms of $ A_i $ ($ i=1,2,3 $).

To conclude the proof, we determine the constants $ A_i $ and $ K $ in terms of the resultant loads $ \mathcal{R}_3^0\,$ and $ \mathcal{M}_i^0\, $. We remark first that the functionals (\ref{eq21}) can be written in another form: namely, inserting (\ref{eq15}) into (\ref{eq21}) and making some transformations we obtain
\begin{equation}\label{eq42}
\begin{array}{l}
\boldsymbol{\mathcal{R}}(\boldsymbol u)= -\displaystyle\int_{\mathcal{C}_0} \big[N_{sz} \boldsymbol \tau + N_{zz} \boldsymbol e_3 - M_{zz,z} \boldsymbol n  \big]  \mathrm{d}s \qquad\quad \mbox{and}
\vspace{6pt}\\ 
\boldsymbol{\mathcal{M}}(\boldsymbol u) = -\!\!\displaystyle\int_{\mathcal{C}_0} \!\big[ N_{zz} \boldsymbol r\times \boldsymbol e_3 + M_{zz}\boldsymbol \tau + \big( N_{sz}\boldsymbol r\cdot \boldsymbol n -2 M_{sz} + M_{zz,z} \boldsymbol r\cdot \boldsymbol\tau \big)\boldsymbol e_3
\big] \mathrm{d}s.
\end{array}
\end{equation}
On the other hand, using (\ref{eq33}) and (\ref{eq36}), we obtain the following expressions for the stress tensors (\ref{eq31}) corresponding to the solution $ \boldsymbol u\, $:
\begin{equation}\label{eq43}
\begin{array}{ll}
\displaystyle
N_{ss}  = -\boldsymbol B\cdot\big(\boldsymbol n + \frac{1}{R}\,\hat{\boldsymbol{r}}\big),\qquad & \displaystyle
N_{sz} = N_{zs} =  C(1-\nu) K\, \frac{\mathcal{A}}{\bar{s}}, 
\vspace{4pt}\\
\displaystyle
N_{zz}  = C (1-\nu^2)\boldsymbol A\cdot\hat{\boldsymbol{r}} -\nu\boldsymbol B\cdot \big(\boldsymbol n + \frac{1}{R}\,\hat{\boldsymbol{r}}\big),
\qquad & \displaystyle
M_{ss}  = -\boldsymbol B\cdot\hat{\boldsymbol{r}},
\vspace{4pt}\\
\displaystyle
M_{zz}  = -D(1-\nu^2)\boldsymbol A\cdot\boldsymbol n - \nu\boldsymbol B\cdot\hat{\boldsymbol{r}}
 , \qquad & \displaystyle
M_{sz} = M_{zs} = - D(1-\nu) K.
\end{array}
\end{equation}
Inserting (\ref{eq43}) in (\ref{eq42}), we derive after some calculations that the end edge conditions (\ref{eq24}) reduce to the following equations
\begin{equation}\label{eq44-45}
\begin{array}{l}
\displaystyle C(1-\nu^2)\boldsymbol A\cdot\int_{0}^{\bar{s}} \hat{\boldsymbol r}\,\mathrm{d}s - \nu
\boldsymbol B\cdot\int_{0}^{\bar{s}} \frac{1}{R}\,\hat{\boldsymbol r}\,\mathrm{d}s = -\mathcal{R}_3^0\,,
\vspace{6pt}\\
\displaystyle C(1-\nu^2)\boldsymbol A\cdot\int_{0}^{\bar{s}} \Big(x_\alpha\,\hat{\boldsymbol r} + \frac{D}{C}\,e_{\alpha\beta}x_\beta^\prime \boldsymbol n\Big)\mathrm{d}s 
\vspace{6pt}\\
\qquad\qquad\qquad\displaystyle 
 - \nu
\boldsymbol B\cdot\int_{0}^{\bar{s}} \Big[ x_\alpha \big(\frac{1}{R}\,\hat{\boldsymbol{r}} + \boldsymbol n\big) - e_{\alpha\beta}x_\beta^\prime \hat{\boldsymbol r}\Big]\,\mathrm{d}s = e_{\alpha\beta}\mathcal{M}_\beta^0\,,
\end{array}
\end{equation}
together with the twist-torque relation (\ref{eq27}). The equations (\ref{eq44-45}) represent a linear algebraic system for the determination of the constants $ A_i $ ($ i=1,2,3 $) in terms of $ \mathcal{R}_3^0\,, \mathcal{M}_1^0\,, \mathcal{M}_2^0 $ (since $ \boldsymbol B $ has already been determined in (\ref{eq41})). The proof is complete.
\end{proof}
\medskip

In Section \ref{Sect5} we present an approximation of the solution $ \boldsymbol u $ given by Theorem \ref{Th2} (in the thin shell limit $ h\rightarrow 0 $) and show the interpretation of the constants $ A_i $ as measures of stretch and curvature of the cylindrical shell considered as a beam.

\subsection{Flexure problem}
\label{Sect4.2}

The explicit solution in closed-form is stated in the next result.

\begin{theorem}\label{Th3}
	The flexure problem for cylindrical shells, consisting in the governing equations (\ref{eq13})--(\ref{eq16}) and the end edge conditions 
	\begin{equation}\label{eq46}
	\boldsymbol{\mathcal{R}}(\boldsymbol u)= \mathcal{R}_\alpha^0\,\boldsymbol{e}_\alpha\,,\qquad \boldsymbol{\mathcal{M}}(\boldsymbol u) = \boldsymbol{0},
	\end{equation}
	admits the following solution
	\begin{equation}\label{eq47}
\begin{array}{l}
\hat{\boldsymbol u} = - \big[(\frac16 z^3 - \frac12\nu z x_\beta x_\beta)\hat A_\alpha + \nu z(\hat{\boldsymbol A}\cdot \hat{\boldsymbol r})x_\alpha + \tilde{K} z\, e_{\alpha\beta}x_\beta \big]\boldsymbol e_\alpha
\vspace{6pt}\\
\quad\,  + \big[\frac12 z^2(\hat{\boldsymbol A}\cdot \hat{\boldsymbol r})  + \tilde{K} \varphi(s) + \psi(s)\big]\boldsymbol e_3
+\displaystyle\frac{1}{C}\,z\Big(
s \hat{\boldsymbol B}\times\boldsymbol e_3 - \int_{0}^{s} \frac{1}{R}(\hat{\boldsymbol B}\cdot \hat{\boldsymbol r})\boldsymbol{\tau}\,\mathrm{d}s 
\Big) 
\vspace{6pt}\\
\quad\;\displaystyle
 - \,z\int_{0}^{s} \boldsymbol n \int_{0}^{s} \big[\frac{\nu}{R}\hat{\boldsymbol A} + \Big( \frac{1}{D}+ \frac{1}{CR^2}\Big)\hat{\boldsymbol B}  \big]\cdot\hat{\boldsymbol r} \,\mathrm{d}s\,\mathrm{d}s,
\end{array}
	\end{equation}
	where  $\hat{\boldsymbol A}= \hat{A}_i\boldsymbol e_i\, $, $ \hat{\boldsymbol B}= \hat{B}_i\boldsymbol e_i $ are constant vectors and the constant $ \tilde{K} $ is given by relation (\ref{eq57}). The constant coefficients $ \hat{B}_i $ are determined in terms of $ \hat{A}_i $ by the system of equations (\ref{eq41}), while the constants $ \hat{A}_i $ can be calculated in terms of the resultant forces $ \mathcal{R}_\alpha^0 $ from the equations (\ref{eq49}). In the above solution, $ \varphi(s) $ is the torsion function (\ref{eq28}) and $ \psi(s) $ is a flexure function  given by (\ref{eq54}).
\end{theorem}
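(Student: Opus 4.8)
The plan is to repeat the argument that proved Theorem~\ref{Th2}, applying Theorem~\ref{Th1} one further time. First I would observe that if $\hat{\boldsymbol u}$ is a solution of the flexure problem with $\partial\hat{\boldsymbol u}/\partial z\in C^1(\bar{\mathcal S})\cap C^2(\mathcal S)$, then by Theorem~\ref{Th1} the field $\partial\hat{\boldsymbol u}/\partial z$ also satisfies the governing equations (\ref{eq13})--(\ref{eq16}) and, by (\ref{eq23}), it obeys $\boldsymbol{\mathcal R}(\partial\hat{\boldsymbol u}/\partial z)=\boldsymbol 0$ and $\boldsymbol{\mathcal M}(\partial\hat{\boldsymbol u}/\partial z)=e_{\alpha\beta}\mathcal R^0_\beta\,\boldsymbol e_\alpha$. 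In other words, $\partial\hat{\boldsymbol u}/\partial z$ is a solution of an extension-bending-torsion problem of the type handled in Theorem~\ref{Th2}, with vanishing axial-force resultant and with a moment resultant having only the in-plane components $e_{\alpha\beta}\mathcal R^0_\beta$. I would therefore look for $\hat{\boldsymbol u}$ whose $z$-derivative is exactly the closed-form field (\ref{eq25}), with $A_i$, $B_i$ renamed $\hat A_i$, $\hat B_i$; since the corresponding $\mathcal M^0_3$ vanishes, the twist relation (\ref{eq27}) forces the twist of $\partial\hat{\boldsymbol u}/\partial z$ to be zero, but one keeps the freedom of adding to $\partial\hat{\boldsymbol u}/\partial z$ an arbitrary infinitesimal rigid displacement, whose rotation about $\boldsymbol e_3$ becomes, after integration in $z$, the constant $\tilde K$ in (\ref{eq47}).

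Next I would integrate the field (\ref{eq25}) in $z$ and add an arbitrary $\boldsymbol\chi(s)$ of class $C^2[0,\bar s]$: this produces precisely the ansatz (\ref{eq47}). The terms $\tfrac16 z^3\hat A_\alpha$, $\tfrac12 z^2(\hat{\boldsymbol A}\cdot\hat{\boldsymbol r})$, $\tilde K z\,e_{\alpha\beta}x_\beta$ and the block $\frac1C z\big(s\hat{\boldsymbol B}\times\boldsymbol e_3-\dots\big)$ come from this integration (the last being $z$ times the $s$-dependent part of the Theorem~\ref{Th2} field), while the still-free data are the $\boldsymbol e_3$-part $\tilde K\varphi(s)+\psi(s)$ of $\boldsymbol\chi$ and its in-plane part. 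I would then insert (\ref{eq47}) into the equilibrium equations (\ref{eq14}). Because every coefficient in (\ref{eq13})--(\ref{eq16}) is independent of $z$ and (\ref{eq47}) is polynomial in $z$, each of (\ref{eq14}) is a polynomial identity in $z$: the coefficients of the positive powers of $z$ reproduce the relations already ensured by $\partial\hat{\boldsymbol u}/\partial z$ being of the form (\ref{eq25}) (in particular the system (\ref{eq36}) for the $\hat{\boldsymbol w}$-block together with $\hat B_i$), whereas the coefficient of $z^0$ in (\ref{eq14})$_2$ collapses --- exactly as the equation for $w_3$ did in the proof of Theorem~\ref{Th2} --- to a first-order ODE for $\psi$; integrating it and imposing $\psi(0)=\psi(\bar s)$ gives the flexure function (\ref{eq54}), and the choice of the $\tilde K\varphi$ term keeps $\epsilon_{sz}$ constant, just as (\ref{eq33}) did. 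The $z^0$-coefficients of (\ref{eq14})$_{1,3}$ are vectorial equations for the in-plane part of $\boldsymbol\chi$, which I would integrate as in (\ref{eq34})--(\ref{eq38}), recovering through the continuity conditions (\ref{eq39}) the same algebraic system (\ref{eq41}) that ties $\hat B_i$ to $\hat A_i$.

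Finally I would evaluate the resultant functionals on (\ref{eq47}). Re-expressing $\boldsymbol{\mathcal R},\boldsymbol{\mathcal M}$ through the constitutive law as in (\ref{eq42}) and computing the stress resultants of (\ref{eq47}) (the counterpart of (\ref{eq43})), the end-edge conditions $\boldsymbol{\mathcal R}(\hat{\boldsymbol u})=\mathcal R^0_\alpha\boldsymbol e_\alpha$, $\boldsymbol{\mathcal M}(\hat{\boldsymbol u})=\boldsymbol 0$ should reduce to the scalar equation $\mathcal M_3(\hat{\boldsymbol u})=0$, giving the value (\ref{eq57}) of $\tilde K$, and to a linear algebraic system (\ref{eq49}) determining the constants $\hat A_i$ (the $\hat B_i$ being already expressed through them by (\ref{eq41})). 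That would exhibit (\ref{eq47}) as a genuine solution and finish the proof. The part I expect to be most laborious is this resultant computation --- in particular $\boldsymbol{\mathcal M}(\hat{\boldsymbol u})$, now carrying the extra $z$-factors and the flexure function $\psi$ --- and, on the structural side, checking that the $z^0$-level equilibrium equations introduced by $\psi$ and the in-plane flexure correction are compatible with the $\bar s$-periodicity conditions (a closure condition on certain cross-section integrals), so that the whole chain of linear systems for $\psi$, $\tilde K$, $\hat A_i$, $\hat B_i$ is consistent.
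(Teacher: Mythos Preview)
Your plan is essentially the paper's proof: apply Theorem~\ref{Th1} to reduce to a bending problem for $\partial\hat{\boldsymbol u}/\partial z$, take $\partial\hat{\boldsymbol u}/\partial z$ to be the Theorem~\ref{Th2} field $\boldsymbol u[\hat{\boldsymbol A},0]$, integrate in $z$, add a pure-torsion piece $\boldsymbol u[\boldsymbol 0,\tilde K]$ and an $s$-only correction, determine $\psi$ from the $z^0$-level of (\ref{eq14})$_2$, and fix $\tilde K$ from $\mathcal M_3(\hat{\boldsymbol u})=0$.

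The one place you diverge is the handling of the in-plane part of your integration constant $\boldsymbol\chi(s)$. The paper makes the a~priori simplification $\tilde{\boldsymbol A}=\boldsymbol 0$ and $\hat w_\alpha(s)=0$ (its relation (\ref{eq51})), and then verifies that with this choice the $z^0$-level of (\ref{eq14})$_{1,3}$ is \emph{identically satisfied}; no new in-plane ODEs appear, and the system (\ref{eq41}) for $\hat B_i$ is simply inherited from the Theorem~\ref{Th2} solution rather than rederived. You expect instead to integrate nontrivial $z^0$-equations for the in-plane part of $\boldsymbol\chi$ and to recover (\ref{eq41}) from their periodicity; carrying this out you would find the equations admit the zero solution, so your route still closes, but the paper's shortcut spares that computation.
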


\begin{proof}
	To find a solution $\hat{\boldsymbol u}$  of the flexure problem, we employ the results of Theorems \ref{Th1} and \ref{Th2}. We note that the solution $ \boldsymbol u $ given in Theorem \ref{Th2} by relation (\ref{eq25}) depend on four constants $ A_i $ and $ K $. To show this dependence explicitly, we denote the displacement field (\ref{eq25}) by $ \boldsymbol u[\boldsymbol A,K] $. 
	
	On the other hand, from Theorem \ref{Th1} we deduce that $ \frac{\partial\hat{\boldsymbol u}  }{\partial z} \,$ satisfies the governing equations (\ref{eq13})--(\ref{eq16}) together with the end edge conditions
	$$
\boldsymbol{\mathcal{R}}\left(\frac{\partial \hat{\boldsymbol u}}{\partial z}\right)=\boldsymbol{0},\qquad \boldsymbol{\mathcal{M}}\left(\frac{\partial \hat{\boldsymbol u}}{\partial z}\right) = e_{\alpha\beta}\mathcal{R}_\beta^0\,\boldsymbol{e}_\alpha\,.
	$$
	Thus, $ \frac{\partial\hat{\boldsymbol u}  }{\partial z} \,$ is a solution of the bending problem with resultant bending moment $ \boldsymbol{\mathcal{M}}^0 = e_{\alpha\beta}\mathcal{R}_\beta^0\,\boldsymbol{e}_\alpha\, $.
Here, the resultant force and twisting moment are vanishing, i.e. $ \boldsymbol{\mathcal{R}}^0=\boldsymbol{0} $ and 
$ \mathcal{M}_3^0 =0$.
	
	Combining the results of Theorems \ref{Th1} and \ref{Th2}, we can choose 
		\begin{equation}\label{eq48}
		\frac{\partial\hat{\boldsymbol u}  }{\partial z} \,=\,\boldsymbol u[\hat{\boldsymbol A},\hat{K}]\,,
		\end{equation}
	where the constants $ \hat{A}_i $  are given by  (in view of (\ref{eq44-45}) written for $ \mathcal{R}_3^0=0 $ and bending moments $ e_{\alpha\beta}\mathcal{R}_\beta^0 \,$)
	\begin{equation}\label{eq49}
\begin{array}{l}
\displaystyle C(1-\nu^2)\hat{\boldsymbol A}\cdot\int_{0}^{\bar{s}} \hat{\boldsymbol r}\,\mathrm{d}s - \nu
\hat{\boldsymbol B}\cdot\int_{0}^{\bar{s}} \frac{1}{R}\,\hat{\boldsymbol r}\,\mathrm{d}s = 0\,,
\vspace{6pt}\\
\displaystyle C(1-\nu^2)\hat{\boldsymbol A}\cdot\int_{0}^{\bar{s}} \Big(x_\alpha\,\hat{\boldsymbol r} + \frac{D}{C}\,e_{\alpha\beta}x_\beta^\prime \boldsymbol n\Big)\mathrm{d}s 
\vspace{6pt}\\
\qquad\qquad\qquad\displaystyle 
- \nu
\hat{\boldsymbol B}\cdot\int_{0}^{\bar{s}} \Big[ x_\alpha \big(\frac{1}{R}\,\hat{\boldsymbol{r}} + \boldsymbol n\big) - e_{\alpha\beta}x_\beta^\prime \hat{\boldsymbol r}\Big]\,\mathrm{d}s = -\mathcal{R}_\alpha^0\,,
\end{array}
	\end{equation}
	together with relations (\ref{eq41}) (written with $ \hat{\boldsymbol A} , \hat{\boldsymbol B} $ instead of $  {\boldsymbol A} ,  {\boldsymbol B} $). By virtue of (\ref{eq27}) (written with twisting moment $ \mathcal{M}_3^0=0 $), we find 
\begin{equation}\label{eq49,5}
\hat{K}=0.
\end{equation}
	
	Taking into account that $ \frac{\partial{\boldsymbol u}[\boldsymbol A,K]  }{\partial z} \, $ is a rigid body displacement field (see (\ref{eq28,5})), we obtain from the relation (\ref{eq48}) and (\ref{eq49,5}) by integration with respect to $ z $ :
	\begin{equation}\label{eq50}
\hat{\boldsymbol u} = \int_{0}^{z}	\boldsymbol u[\hat{\boldsymbol A},0]\,\mathrm{d}z + \boldsymbol u[\tilde{\boldsymbol A},\tilde{K}] + \hat{\boldsymbol w}(s),
	\end{equation}
	where $ \tilde{\boldsymbol A}=\tilde{A}_i \boldsymbol e_i $ and $ \tilde{K} $ are constants, while $ \hat{\boldsymbol w}(s)= \hat{w}_i(s)\boldsymbol e_i $ is an arbitrary function of class $ C^2[0,\bar{s}] $.
	Suggested by the previous results concerning the flexure problem in three-dimensional elasticity \cite{Iesan86,Iesan87} and Cosserat shell theory \cite{Birsan-JE-04}, we search for the solution $ \hat{\boldsymbol u}  $ such that
	\begin{equation}\label{eq51}
\tilde{\boldsymbol A}= \boldsymbol 0\qquad \mbox{and}\qquad \hat{w}_\alpha(s)=0.
	\end{equation}
	The choice (\ref{eq51}) is made for the sake of simplity. (Alternatively, we can work with the unknown vector $ \tilde{\boldsymbol A} $ and functions $ \hat{w}_\alpha(s) $ and obtain finally from the equilibrium equations and end edge conditions that the relations (\ref{eq51}) hold.)
	Thus, using (\ref{eq51}) in (\ref{eq50}), we search for a solution $ \hat{\boldsymbol u}  $  of the flexure problem in the form
	\begin{equation}\label{eq52}
\hat{\boldsymbol u} = \int_{0}^{z}	\boldsymbol u[\hat{\boldsymbol A},0]\,\mathrm{d}z + \boldsymbol u[\boldsymbol 0,\tilde{K}] + \psi(s)\boldsymbol e_3\,,
	\end{equation}
	where we have denoted $ \psi(s)=\hat{w}_3(s) $ for brevity.
	
	In what follows, we shall determine the unknown function $ \psi(s) $
	from the equilibrium equations (\ref{eq14}) and the constant $ \tilde{K} $ from the end edge conditions (\ref{eq46}). Inserting the relation (\ref{eq25}) into (\ref{eq52}), we deduce from the geometrical equations (\ref{eq13}) the following strain measures corresponding to $ \hat{\boldsymbol u} \,$:
	\begin{equation}\label{eq53}
\begin{array}{l}
\displaystyle
\hat\epsilon_{ss}= -z\big[  \nu\hat{\boldsymbol A}\cdot\hat{\boldsymbol{r}} +\frac{1}{C}\,\hat{\boldsymbol B}\cdot\big(\boldsymbol n + \frac{1}{R}\,\hat{\boldsymbol{r}}\big)  \big], \qquad 
\hat\epsilon_{zz}=  z(\hat{\boldsymbol A}\cdot\hat{\boldsymbol r}),
\vspace{4pt}\\
\displaystyle
\hat\epsilon_{sz}= \hat\epsilon_{zs}= \frac12 \Big[ \frac{\nu}{2}(x_\alpha x_\alpha)\hat{\boldsymbol A} -\nu(\hat{\boldsymbol A}\cdot\hat{\boldsymbol{r}})\boldsymbol r    +  \frac{1}{C}\Big(s\hat{\boldsymbol B}\times\boldsymbol e_3 -   \int_{0}^{s} \frac{1}{R}(\hat{\boldsymbol B}\cdot \hat{\boldsymbol r})\boldsymbol{\tau}\,\mathrm{d}s\Big) 
\vspace{4pt}\\
\displaystyle
\qquad\quad -\int_{0}^{s} \boldsymbol n \int_{0}^{s} \big[\frac{\nu}{R}\hat{\boldsymbol A} + \Big( \frac{1}{D}+ \frac{1}{CR^2}\Big)\hat{\boldsymbol B}  \big]\cdot\hat{\boldsymbol r} \,\mathrm{d}s\,\mathrm{d}s                         
\Big]\cdot \boldsymbol{\tau}
+\tilde{K}\,\frac{\mathcal{A}}{\bar{s}}+ \frac12\,\psi^\prime (s),
\vspace{4pt}\\
\displaystyle
\hat\rho_{ss}= z\big(  \nu\hat{\boldsymbol{A}}
\cdot\boldsymbol n - \frac{1}{D}\,\hat{\boldsymbol B}\cdot\hat{\boldsymbol{r}} \big), \qquad 
\hat\rho_{zz}=  - z (\hat{\boldsymbol A}\cdot \boldsymbol n),
\vspace{4pt}\\
\displaystyle
\hat\rho_{sz}= \hat\rho_{zs}= - \tilde{K} + \nu\hat{\boldsymbol A}\cdot(e_{\alpha\beta}x_\beta\boldsymbol e_\alpha) + 
\frac{1}{C}   \hat{\boldsymbol B}\cdot  \boldsymbol{\tau} -\!\! \int_{0}^{s}\! \big[\frac{\nu}{R}\hat{\boldsymbol A} + \Big( \frac{1}{D}+ \frac{1}{CR^2}\Big)\hat{\boldsymbol B}  \big]\!\cdot\hat{\boldsymbol r} \,\mathrm{d}s.
\end{array}
	\end{equation}
	We note that the dependence of these tensors on $ z $ is at most linear. Using the expressions (\ref{eq53}) we can calculate the stress tensors according to relations (\ref{eq15}) and (\ref{eq16}), and then write the equilibrium equations (\ref{eq14}). We obtain that the equilibrium equations (\ref{eq14})$ _{1,3} $ are identically satisfied, while the equilibrium equation (\ref{eq14})$ _{2} $ reduces to
	$$
C(1-\nu) \big(\hat\epsilon_{sz}\big)^\prime + C (1-\nu^2)\hat{\boldsymbol A}\cdot\hat{\boldsymbol{r}} -\nu\hat{\boldsymbol B}\cdot \big(\boldsymbol n + \frac{1}{R}\,\hat{\boldsymbol{r}}\big) =0,
	$$
where $ \hat\epsilon_{sz} $	is given by (\ref{eq53})$_3\, $.
	Integrating the last equation two times with respect to $ s $, we find the solution
	\begin{equation}\label{eq54}
\begin{array}{l}
\displaystyle
\psi(s) = -\int_{0}^{s}  \Big\{ \boldsymbol{\tau}\cdot \Big[
 \frac{\nu}{2}(x_\alpha x_\alpha)\hat{\boldsymbol A} -\nu(\hat{\boldsymbol A}\cdot\hat{\boldsymbol{r}})\boldsymbol r    +  \frac{1}{C}\Big(s\hat{\boldsymbol B}\times\boldsymbol e_3 -   \int_{0}^{s} \frac{1}{R}(\hat{\boldsymbol B}\cdot \hat{\boldsymbol r})\boldsymbol{\tau}\,\mathrm{d}s\Big)
 \vspace{4pt}\\
 \displaystyle
 \qquad\qquad
 -\int_{0}^{s} \boldsymbol n \int_{0}^{s} \big[\frac{\nu}{R}\hat{\boldsymbol A} + \Big( \frac{1}{D}+ \frac{1}{CR^2}\Big)\hat{\boldsymbol B}  \big]\cdot\hat{\boldsymbol r} \,\mathrm{d}s\,\mathrm{d}s   \Big]
\vspace{4pt}\\
\displaystyle
\qquad\qquad +\frac{2}{C(1-\nu)}  \int_{0}^{s} \big[C (1-\nu^2)\hat{\boldsymbol A}\cdot\hat{\boldsymbol{r}} -\nu\hat{\boldsymbol B}\cdot \big(\boldsymbol n + \frac{1}{R}\,\hat{\boldsymbol{r}}\big) \big]\mathrm{d}s                    
\Big\}\mathrm{d}s 
+
{K}_0s,
\end{array}
	\end{equation}
	where the constant $ K_0 $ is determined by the continuity condition $ \psi(0)=\psi(\bar{s}) $ as
	\begin{equation}\label{eq55}
	\begin{array}{l}
	\displaystyle
	K_0 = \frac{1}{\bar{s}} \int_{0}^{\bar{s}}  \Big\{ \boldsymbol{\tau}\cdot \Big[
	\frac{\nu}{2}(x_\alpha x_\alpha)\hat{\boldsymbol A} -\nu(\hat{\boldsymbol A}\cdot\hat{\boldsymbol{r}})\boldsymbol r    +  \frac{1}{C}\Big(s\hat{\boldsymbol B}\times\boldsymbol e_3 -   \int_{0}^{s} \frac{1}{R}(\hat{\boldsymbol B}\cdot \hat{\boldsymbol r})\boldsymbol{\tau}\,\mathrm{d}s\Big)
	\vspace{4pt}\\
	\displaystyle
	\qquad\qquad
	-\int_{0}^{s} \boldsymbol n \int_{0}^{s} \big[\frac{\nu}{R}\hat{\boldsymbol A} + \Big( \frac{1}{D}+ \frac{1}{CR^2}\Big)\hat{\boldsymbol B}  \big]\cdot\hat{\boldsymbol r} \,\mathrm{d}s\,\mathrm{d}s   \Big]
	\vspace{4pt}\\
	\displaystyle
	\qquad\qquad +\frac{2}{C(1-\nu)}  \int_{0}^{s} \big[C (1-\nu^2)\hat{\boldsymbol A}\cdot\hat{\boldsymbol{r}} -\nu\hat{\boldsymbol B}\cdot \big(\boldsymbol n + \frac{1}{R}\,\hat{\boldsymbol{r}}\big) \big]\mathrm{d}s                    
	\Big\}\mathrm{d}s .
	\end{array}
	\end{equation}
	We mention that the conditions $ \psi^\prime(0)=\psi^\prime(\bar{s}) $  and $ \psi^{\prime\prime}(0)=\psi^{\prime\prime}(\bar{s}) $ are also satisfied, by virtue of the relation (\ref{eq49})$ _{1}\, $. 
	
	Finally,  we impose the end edge conditions (\ref{eq46}) in order to determine the constant $ \tilde{K} $.
	In view of (\ref{eq53}), (\ref{eq54}) and (\ref{eq15}), the stress tensors $ \hat{\boldsymbol N} $ and $ \hat{\boldsymbol M} $ corresponding to the solution $ \hat{\boldsymbol u} $ have the components
	\begin{equation}\label{eq56}
\begin{array}{l}
\displaystyle
\hat N_{ss}  = -z\, \hat{\boldsymbol B}\cdot\big(\boldsymbol n + \frac{1}{R}\,\hat{\boldsymbol{r}}\big),\qquad  
\hat N_{zz}  = z \big[C (1-\nu^2)\hat{\boldsymbol A}\cdot\hat{\boldsymbol{r}} -\nu\hat{\boldsymbol B}\cdot \big(\boldsymbol n + \frac{1}{R}\,\hat{\boldsymbol{r}}\big) \big],
\vspace{4pt}\\
\displaystyle
\hat N_{sz} = \hat N_{zs} =  C(1-\nu) \Big(\tilde{K}\frac{\mathcal{A}}{\bar{s}} + \frac12 K_0 \Big) - \!\int_{0}^{s}\!\! \big[C (1\!-\!\nu^2)\hat{\boldsymbol A}\cdot\hat{\boldsymbol{r}} -\nu\hat{\boldsymbol B}\cdot \!\big(\boldsymbol n \!+ \frac{1}{R}\,\hat{\boldsymbol{r}}\big) \big]\mathrm{d}s , 
\vspace{4pt}\\
\displaystyle
\hat M_{ss}  = -z(\hat{\boldsymbol B}\cdot\hat{\boldsymbol{r}}),\qquad
\hat M_{zz}  = - z \big[D(1-\nu^2)\hat{\boldsymbol A}\cdot\boldsymbol n + \nu\hat{\boldsymbol B}\cdot\hat{\boldsymbol{r}}\big]
,
\vspace{4pt}\\
 \displaystyle
\hat M_{sz} = \hat M_{zs} =  D(1-\nu) \Big(- \tilde{K} + \nu\hat{\boldsymbol A}\cdot(\boldsymbol r\times\boldsymbol e_3) + 
\frac{1}{C}   \hat{\boldsymbol B}\cdot  \boldsymbol{\tau} 
\vspace{4pt}\\
\displaystyle
\qquad\qquad\qquad\qquad\qquad\qquad\qquad
- \int_{0}^{s} \big[\frac{\nu}{R}\hat{\boldsymbol A} + \Big( \frac{1}{D}+ \frac{1}{CR^2}\Big)\hat{\boldsymbol B}  \big]\!\cdot\hat{\boldsymbol r} \,\mathrm{d}s\Big).
\end{array}
	\end{equation}
	Using the expression (\ref{eq42})$ _{1} \,$ and (\ref{eq56}), we remark that the end edge condition $ \boldsymbol{\mathcal{R}} (\hat{\boldsymbol u} ) = \mathcal{R}_\alpha^0 \boldsymbol e_\alpha $ is satisfied, since it reduces to the equations (\ref{eq49})$ _{2}\, $. On the other hand, the remaining end edge condition $ \boldsymbol{\mathcal{M}}(\hat{\boldsymbol u} ) = \boldsymbol 0 $
	yields (with the help of relations (\ref{eq42})$ _{2} $ and (\ref{eq56})) an algebraic equation for the determination of $ \tilde{K} $. Thus, we find the value
	\begin{equation}\label{eq57}
\begin{array}{l}
\displaystyle
\tilde{K}\Big[ 2(1-\nu)\Big( C\frac{\mathcal{A}^2}{\bar{s}}+ D\bar{s}\Big)\Big] = -C(1-\nu)\mathcal{A} K_0  
+ 2D(1-\nu)\int_{0}^{\bar{s}} \Big\{ \nu\hat{\boldsymbol A}\cdot(\boldsymbol r\times\boldsymbol e_3) 
\vspace{4pt}\\
\displaystyle
- \!\int_{0}^{s} \!\big[\frac{\nu}{R}\hat{\boldsymbol A} +\! \Big( \frac{1}{D}+ \frac{1}{CR^2}\Big)\hat{\boldsymbol B}  \big]\!\cdot\hat{\boldsymbol r}\, \mathrm{d}s \Big\}\mathrm{d}s +\!
\int_{0}^{\bar{s}}\!\! \Big\{
(\boldsymbol r\cdot\boldsymbol{\tau}) \big[D(1-\nu^2)\hat{\boldsymbol A}\cdot\boldsymbol n + \nu\hat{\boldsymbol B}\cdot\hat{\boldsymbol{r}}\big]
\vspace{4pt}\\
\displaystyle
\qquad\qquad\qquad\qquad\qquad
+ (\boldsymbol r\cdot\boldsymbol{n}) \int_{0}^{s} \big[C (1-\nu^2)\hat{\boldsymbol A}\cdot\hat{\boldsymbol{r}} -\nu\hat{\boldsymbol B}\cdot \big(\boldsymbol n + \frac{1}{R}\,\hat{\boldsymbol{r}}\big) \big]\mathrm{d}s
\Big\}\mathrm{d}s,
\end{array}
	\end{equation}
	where the constant $ K_0 $ has been determined in (\ref{eq55}). This concludes the proof. 
\end{proof}
\medskip

In the next section we present an approximated form of the exact closed-form solution for Saint-Venant's problem, which could be more useful in applications. The relative errors contained in the simplified solution are smaller than the errors contained unavoidably in the relations of any first-approximation shell theory.

\section{The simplified solution}
\label{Sect5}

Let us choose the origin $ O $ of the Cartesian coordinate system in the centroid of the cross-section, i.e. we assume that
\begin{equation}\label{eq59}
\int_{0}^{\bar{s}} x_\alpha(s)\,\mathrm{d}s =0\quad (\alpha=1,2),\quad\mbox{and let}\qquad I_{\alpha\beta}=\int_{0}^{\bar{s}} x_\alpha x_\beta\,\mathrm{d}s\,.
\end{equation}
We consider that the shell is very thin, i.e. $ \frac{h}{R}\ll 1 $. Thus, we shall neglect some small terms which are of higher order in $ \frac{h}{R} $ and find a simpler form of the solution. For instance, we have
\begin{equation}\label{eq58}
\frac{1}{D}+ \frac{1}{CR^2}= \frac{1}{D}\Big(  1 + \frac{D}{CR^2} \Big) \simeq \frac{1}{D}\;,
\end{equation}
since $ \frac{D}{CR^2} = \frac{1}{12}\big( \frac{h}{R} \big)^2 \ll 1 $. Furthermore, we consider that the length $ \bar{s} $ of the cross-section curve $ \mathcal{C}_0 $ is of the same order of magnitude as $ R $, while the area $ \mathcal{A} $ of the cross-section is of the same order as $ \bar{s}^2 $ (for instance, for a circular tube $ \bar{s}=2\pi R $ and $ \mathcal{A}= \frac14 \bar{s}^2 $). Hence, we have $ \frac{h}{\bar{s}}\ll 1 $ and we can approximate the torsional rigidity in (\ref{eq27}) as follows
$$
C\,\frac{\mathcal{A}^2}{\bar{s}} + D\bar{s} = C\,\frac{\mathcal{A}^2}{\bar{s}} \Big( 1 + \frac{D}{C}\, \frac{\bar{s}^2}{\mathcal{A}^2} \Big) \;\simeq\; C\,\frac{\mathcal{A}^2}{\bar{s}}\,,
$$
since $ \frac{D}{C}\,\frac{\bar{s}^2}{\mathcal{A}^2} = \frac{h^2}{12}\,\frac{\bar{s}^2}{\mathcal{A}^2}= O\big( (\frac{h}{\bar{s}})^2\big)\ll 1 $. Then , the twist-torque relation  (\ref{eq27})  simplifies to
\begin{equation}\label{eq62}
K = -\frac{\mathcal{M}_3^0\,\bar{s}}{2C(1-\nu)\mathcal{A}^2 }
= -\frac{\mathcal{M}_3^0\,\bar{s}}{4\mu h\mathcal{A}^2}\;,
\end{equation}
where $ \mu = \frac{E}{2(1+\nu)} $ is the shear modulus (Lam\'e's constant). Thus, $ K  $ is a global measure of twist for the cylindrical shell.

In view of (\ref{eq59}) and (\ref{eq58}) one can write the system of equations (\ref{eq41}) in the simplified form 
\begin{equation}\label{eq1o}
\begin{array}{rcl}
I_{\alpha\beta}\,B_\beta & = & -\nu D\Big( A_\beta  \displaystyle  \int_{0}^{\bar{s}} \frac{x_\alpha x_\beta}{R}\,\mathrm{d}s  + A_3\bar{s}\int_{0}^{\bar{s}} \frac{x_\alpha}{R}\,\mathrm{d}s \Big)  ,
\vspace{4pt}\\
B_3 & = & \displaystyle  -\frac{\nu D}{\bar{s}^2} \Big( A_\alpha \int_{0}^{\bar{s}} \frac{x_\alpha}{R}\,\mathrm{d}s   + A_3\bar{s}\, 2\pi \Big)  ,
\end{array}
\end{equation}
since $   \int_{0}^{\bar{s}} \frac{1}{R}\,\mathrm{d}s = 2\pi $. From equations (\ref{eq1o}) we determine the constants $ B_i $ in terms of $ A_i $ ($ i=1,2,3 $) and we observe that $ \boldsymbol B $ has the same order of magnitude as $ \frac{D}{\bar{s}}\,\boldsymbol A\, $. This remark allows us to neglect some small terms in the equations (\ref{eq44-45}) and to write the system (\ref{eq44-45}) in the approximated form
$$
\displaystyle C(1-\nu^2)\boldsymbol A\cdot\int_{0}^{\bar{s}} \hat{\boldsymbol r}\,\mathrm{d}s  = -\mathcal{R}_3^0\,, \qquad
\displaystyle C(1-\nu^2)\boldsymbol A\cdot\int_{0}^{\bar{s}} x_\alpha\,\hat{\boldsymbol r}\,\mathrm{d}s = e_{\alpha\beta}\mathcal{M}_\beta^0\,,
$$
or equivalently, in view of (\ref{eq59}) and $ C(1-\nu^2)=Eh $,
\begin{equation}\label{eq61}
I_{\alpha\beta}A_\beta = \frac{e_{\alpha\beta}\mathcal{M}_\beta^0}{E h}
\qquad \mbox{and}\qquad
\bar{A}_3 = A_3\bar{s} = - \frac{\mathcal{R}_3^0}{\bar{s}E h} \,,
\end{equation}
where we denote $ \bar{A}_3 = A_3\bar{s} $. From (\ref{eq61}) we can see that $ \bar{A}_3  $ is a global measure of axial strain and $ A_\alpha $ are global measures of axial curvature of the cylindrical shell.

Similarly, the constants $ \hat{B}_i $ can be determined in terms of $ \hat{A}_i $ ($ i=1,2,3 $) from the same relations (\ref{eq1o}). Then, the system of equations (\ref{eq49}) reduces to
\begin{equation}\label{eq64}
I_{\alpha\beta}\,\hat{A}_\beta = -\frac{\mathcal{R}_\alpha^0}{Eh} \qquad \mbox{and}\qquad  \hat{A}_3= 0  \;.
\end{equation}
The constants $ \hat{A}_1\, $, $ \hat{A}_2\, $ can be interpreted as global measures of strain appropriate to flexure.
Due to the fact that $ \boldsymbol B \sim \frac{D}{\bar{s}}\,\boldsymbol A\,  $, we can neglect the terms of the form
\[  
\displaystyle\frac{1}{C}\Big(
s \boldsymbol B\times\boldsymbol e_3 - \int_{0}^{s} \frac{1}{R}(\boldsymbol B\cdot \hat{\boldsymbol r})\boldsymbol{\tau}\,\mathrm{d}s 
\Big) \qquad \mbox{and} \qquad \hat{\boldsymbol B}\cdot \big(\boldsymbol n + \frac{1}{R}\,\hat{\boldsymbol{r}}\big)
\]
in the displacement fields (\ref{eq25}), (\ref{eq47}) and in the relations (\ref{eq54})--(\ref{eq57}). Thus, neglecting the small terms of order $ \big(\frac{h}{\bar{s}}\big)^2\ll 1 $ we find from (\ref{eq54})--(\ref{eq57}) the following expression of the flexure function 
\begin{equation}\label{eq2o}
\begin{array}{c}
\displaystyle
\psi(s) = {K}_0s -\int_{0}^{s}  \Big\{ \hat{A}_\alpha \Big[
\frac{\nu}{2}\, x_\alpha^\prime(x_\beta x_\beta) -\nu x_\alpha(x_\beta x_\beta^\prime)   
+2(1+\nu) \int_{0}^{s}x_\alpha \,\mathrm{d}s \Big]
\vspace{4pt}\\
\qquad\qquad\qquad
\displaystyle
-\, \boldsymbol\tau \cdot\int_{0}^{s} \boldsymbol n \int_{0}^{s} \Big(\frac{\nu}{R}\hat{\boldsymbol A} +  \frac{1}{D}\hat{\boldsymbol B}  \Big)\cdot\hat{\boldsymbol r} \,\mathrm{d}s\,\mathrm{d}s                   
\Big\}\mathrm{d}s ,
\end{array}
\end{equation}
while the constants $ \tilde{K} $ and $ K_0 $ are given by 
\begin{equation}\label{eq3o}
\begin{array}{l}
\displaystyle
\tilde{K} = -\frac{1}{2 \mathcal{A}}\int_{0}^{\bar{s}}  \Big\{ \hat{A}_\alpha \Big[
\frac{\nu}{2}\, x_\alpha^\prime(x_\beta x_\beta) -\nu x_\alpha(x_\beta x_\beta^\prime)   
- \frac{(1+\nu)\bar{s}}{\mathcal{A}}\, x_\alpha \varphi(s)  \Big]
\vspace{4pt}\\
\qquad\qquad\qquad\qquad\qquad\qquad
\displaystyle
-\, \boldsymbol\tau \cdot\int_{0}^{s} \boldsymbol n \int_{0}^{s} \Big(\frac{\nu}{R}\hat{\boldsymbol A} +  \frac{1}{D}\hat{\boldsymbol B}  \Big)\cdot\hat{\boldsymbol r} \,\mathrm{d}s\,\mathrm{d}s                   
\Big\}\mathrm{d}s ,
\vspace{4pt}\\
\displaystyle
K_0 = - \tilde{K}\, \frac{2\mathcal{A}}{\bar{s}}\, - \, \frac{1+\nu}{\mathcal{A}} \int_{0}^{\bar{s}} x_\alpha \int_{0}^{s} \boldsymbol r\cdot \boldsymbol n \,\mathrm{d}s\,\mathrm{d}s .
\end{array}
\end{equation}
Consequently, on the basis of Theorems \ref{Th2} and \ref{Th3} we deduce the following simplified solution to our problem.

\begin{theorem}\label{Th4}
	Consider the relaxed Saint-Venant's problem characterized by the equations (\ref{eq13})--(\ref{eq16}) and the end edge conditions (\ref{eq22}). Then, the approximated form of the solution for the displacement field $ \boldsymbol u = u_i \boldsymbol e_i $ is
\begin{equation}\label{eq4o}	
\begin{array}{l}
	u_\alpha= - \hat{A}_\alpha\big( \frac16\,z^3 - \frac12\,\nu z x_\beta x_\beta\big)-\frac12 A_\alpha(z^2- \nu x_\beta x_\beta) - \nu x_\alpha (z \hat{A}_\beta x_\beta \!+\! {A}_\beta x_\beta \!+\! \bar{A}_3)
		\vspace{4pt}\\
		\displaystyle
	\qquad\;\;	 - (K+\tilde{K}) z e_{\alpha\beta}x_\beta -e_{\alpha\beta} \!\int_{0}^{s}\! x_\beta^\prime\! \int_{0}^{s}\!
\Big[\frac{\nu}{R}(z\hat{\boldsymbol A}+ \boldsymbol A ) +  \frac{1}{D}(z\hat{\boldsymbol B}+ \boldsymbol B)  \Big]\!\cdot\hat{\boldsymbol r} \,\mathrm{d}s\,\mathrm{d}s,
\vspace{4pt}\\
	u_3 = \frac12z^2\big( \hat{A}_\alpha x_\alpha \big) + z\big( {A}_\alpha x_\alpha + \bar{A}_3\big) + (K+\tilde{K}) \varphi(s) + \psi(s),
\end{array}
\end{equation}
where the torsion function $ \varphi(s) $ is given by  (\ref{eq28}), the flexure function $ \psi(s) $ and the constants $ K $,  $ \tilde{K} $, $ \boldsymbol A $, $ \hat{\boldsymbol A} $, $ \boldsymbol B $ and  $ \hat{\boldsymbol B} $  are determined
in terms of the resultant loads $\mathcal{R}_i^0 $ and $ \mathcal{M}_i^0\,$  by the relations (\ref{eq62})--(\ref{eq3o}).
\end{theorem}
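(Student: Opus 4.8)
The plan is to produce the exact solution of the general relaxed Saint-Venant's problem by linear superposition of the two solutions already in hand, and then to carry out the thin-shell truncation of Section \ref{Sect5}. First I would decompose the prescribed resultants as $ \boldsymbol{\mathcal{R}}^0 = \mathcal{R}_3^0\boldsymbol e_3 + \mathcal{R}_\alpha^0\boldsymbol e_\alpha $ and $ \boldsymbol{\mathcal{M}}^0 = \mathcal{M}_i^0\boldsymbol e_i $, which is exactly the splitting into the extension-bending-torsion problem and the flexure problem recorded after Theorem \ref{Th1}. I would then set $ \boldsymbol u = \boldsymbol u[\boldsymbol A,K] + \hat{\boldsymbol u} $, where $ \boldsymbol u[\boldsymbol A,K] $ is the field (\ref{eq25}) of Theorem \ref{Th2} with constants fixed by $ \mathcal{R}_3^0,\mathcal{M}_i^0 $, and $ \hat{\boldsymbol u} $ is the field (\ref{eq47}) of Theorem \ref{Th3} with constants fixed by $ \mathcal{R}_\alpha^0 $. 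Since the field equations (\ref{eq13})--(\ref{eq16}) and the functionals (\ref{eq21}) are linear, $ \boldsymbol u $ solves (\ref{eq13})--(\ref{eq16}) and satisfies $ \boldsymbol{\mathcal{R}}(\boldsymbol u) = \boldsymbol{\mathcal{R}}^0 $, $ \boldsymbol{\mathcal{M}}(\boldsymbol u) = \boldsymbol{\mathcal{M}}^0 $, i.e. the end edge conditions (\ref{eq22}).

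Next I would specialize the approximations $ \frac hR \ll 1 $, $ \frac h{\bar s} \ll 1 $ of Section \ref{Sect5}. Replacing $ \frac1D + \frac1{CR^2} $ by $ \frac1D $ as in (\ref{eq58}) turns the systems (\ref{eq41}) for $ B_i $ and $ \hat B_i $ into (\ref{eq1o}), from which $ \boldsymbol B \sim \frac D{\bar s}\boldsymbol A $ and $ \hat{\boldsymbol B} \sim \frac D{\bar s}\hat{\boldsymbol A} $. Hence the term $ \frac1C\big(s\boldsymbol B\times\boldsymbol e_3 - \int_0^s \frac1R(\boldsymbol B\cdot\hat{\boldsymbol r})\boldsymbol\tau\,\mathrm ds\big) $ in (\ref{eq25}), together with its $ z $-multiplied counterpart in (\ref{eq47}), is of relative order $ (h/\bar s)^2 $ and is discarded, whereas in the double integrals the combination $ \frac\nu R\boldsymbol A + \frac1D\boldsymbol B $ is kept, since $ \frac1D\boldsymbol B \sim \frac1{\bar s}\boldsymbol A $ is of the same magnitude as $ \frac\nu R\boldsymbol A $. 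The torsional-rigidity simplification gives (\ref{eq62}) for $ K $; the systems (\ref{eq44-45}) and (\ref{eq49}) reduce to (\ref{eq61}) for $ \boldsymbol A,\bar A_3 $ and to (\ref{eq64}) for $ \hat{\boldsymbol A} $ (in particular $ \hat A_3 = 0 $); and the flexure function reduces to (\ref{eq2o}) with constants $ \tilde K,K_0 $ given by (\ref{eq3o}).

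Then I would read off the Cartesian components $ u_i = \boldsymbol u\cdot\boldsymbol e_i $ of the superposed, truncated field. Using $ \boldsymbol n = e_{\alpha\beta}x_\beta^\prime\boldsymbol e_\alpha $ from (\ref{eq10}) and $ \hat{\boldsymbol A}\cdot\hat{\boldsymbol r} = \hat A_\beta x_\beta $ (because $ \hat A_3 = 0 $), the $ \boldsymbol e_\alpha $-parts of the two surviving double integrals merge into $ -e_{\alpha\beta}\int_0^s x_\beta^\prime\int_0^s\big[\frac\nu R(z\hat{\boldsymbol A}+\boldsymbol A) + \frac1D(z\hat{\boldsymbol B}+\boldsymbol B)\big]\cdot\hat{\boldsymbol r}\,\mathrm ds\,\mathrm ds $; the torsion contributions of (\ref{eq25}) and (\ref{eq47}) add to $ -(K+\tilde K)z\,e_{\alpha\beta}x_\beta $ in the $ \boldsymbol e_\alpha $-component and to $ (K+\tilde K)\varphi(s) $ in the $ \boldsymbol e_3 $-component; the polynomial terms in $ z $ and in $ x_\beta x_\beta $ regroup as displayed; and the only $ \boldsymbol e_3 $-term not generated by torsion, beyond the $ z $-linear and $ \tfrac12 z^2 $ pieces, is the flexure function $ \psi(s) $. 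This yields precisely (\ref{eq4o}).

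The superposition itself is routine; the real work — and the main obstacle — is the order-of-magnitude bookkeeping of Section \ref{Sect5}, namely verifying that every discarded term is genuinely $ O((h/R)^2) $ or $ O((h/\bar s)^2) $ relative to the terms retained. The delicate point is that $ \frac1D\boldsymbol B $ must \emph{not} be neglected even though $ \boldsymbol B $ is itself small, because $ D^{-1}\sim h^{-3} $ is large; getting this wrong would drop a leading-order contribution. A minor additional check is that the continuity conditions (\ref{eq39}), equivalently $ \psi(0)=\psi(\bar s) $, $ \psi^\prime(0)=\psi^\prime(\bar s) $, $ \psi^{\prime\prime}(0)=\psi^{\prime\prime}(\bar s) $, are preserved under the truncation; this holds because they rest on (\ref{eq49})$_{1}$, which survives the approximation.
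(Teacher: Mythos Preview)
Your proposal is correct and follows essentially the same approach as the paper: the paper's argument for Theorem \ref{Th4} is precisely the sequence of thin-shell simplifications in Section \ref{Sect5} applied to the superposition of the exact solutions (\ref{eq25}) and (\ref{eq47}) from Theorems \ref{Th2} and \ref{Th3}. You have also correctly isolated the one nontrivial point in the bookkeeping, namely that $\frac{1}{D}\boldsymbol B$ is of the same order as $\frac{\nu}{R}\boldsymbol A$ and must be retained in the double integral.
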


\noindent\textbf{Remarks: 1.} In relations (\ref{eq4o}) we recognize the form of classical Saint-Venant's solution for the extension, bending, torsion, and flexure of three-dimensional cylinders, see e.g. \cite{Iesan86,Iesan87}. Except for the integral term in $ u_\alpha\, $, which is specific to shells, the structure of the solution (\ref{eq4o}) coincides with the expression of Saint-Venant's solution.
However, the torsion function for shells (\ref{eq28}) and the relations (\ref{eq62}), (\ref{eq61}), (\ref{eq64}) have different forms as compared to the three-dimensional counterparts. Thus, the solution obtained in this paper are the analogues of the classical Saint-Venant's solutions in the classical theory of shells.

\textbf{2.} The stress state of the shell corresponding to the solution (\ref{eq4o}) is given by
\begin{equation}\label{eq4obis}
\begin{array}{l}
\displaystyle
N_{ss}  = 0,\qquad  \quad
N_{zz}  = C (1-\nu^2)  \big( z \hat{\boldsymbol A} + \boldsymbol A \big)\cdot\hat{\boldsymbol{r}}\,,
\vspace{4pt}\\\displaystyle
N_{sz} = N_{zs} =  C(1-\nu) K\, \frac{\mathcal{A}}{\bar{s}}
- C (1 \!-\! \nu^2)    \hat{\boldsymbol A}  \!\cdot\! \Big( 
\int_{0}^{s} \boldsymbol r\,\mathrm{d}s + \frac{1}{2 \mathcal{A}} \int_{0}^{\bar{s}} \!\boldsymbol r\!\int_{0}^{s}\! \boldsymbol r\cdot\boldsymbol n\,\mathrm{d}s\,\mathrm{d}s
\Big), 
\vspace{4pt}\\
\displaystyle
M_{ss}  = -\big( z \hat{\boldsymbol B} + \boldsymbol B \big)\cdot\hat{\boldsymbol{r}}, 
 \qquad  
M_{zz}  = -D(1-\nu^2)\big( z \hat{\boldsymbol A} + \boldsymbol A \big)\cdot\boldsymbol n - \nu\big( z \hat{\boldsymbol B} + \boldsymbol B \big)\cdot\hat{\boldsymbol{r}}\,,
\vspace{4pt}\\
\displaystyle
M_{sz} = M_{zs} = - D(1-\nu) \Big[K+\tilde{K}-\nu e_{\alpha\beta}\hat{A}_\alpha x_\beta
+ \int_{0}^{s} \Big(\frac{\nu}{R}\hat{\boldsymbol A} +  \frac{1}{D}\hat{\boldsymbol B}  \Big)\cdot\hat{\boldsymbol r} \,\mathrm{d}s \Big].
\end{array}
\end{equation}
In (\ref{eq4obis}) some small terms of order $ \big(\frac{h}{\bar{s}}\big)^2\ll 1 $, such as e.g.  $ {\boldsymbol B}\cdot \big(\boldsymbol n + \frac{1}{R}\hat{\boldsymbol{r}}\big) $ and $ \,\frac{1}{C}   \hat{\boldsymbol B}\cdot  \boldsymbol{\tau}  $, have been neglected, see relations (\ref{eq43}) and (\ref{eq56}) for comparison.
The dependence of the stress tensors on the axial coordinate $ z $ in (\ref{eq4obis}) is at most linear.

\textbf{3.} The torque-twist relation (\ref{eq62}) and the relations (\ref{eq61}), which express the measures of curvature $ A_\alpha $ and stretch $ \bar{A}_3 \,$ in terms of the resultants $\mathcal{M}_\alpha^0 $ and $ \mathcal{R}_3^0\,$,   can be found in various forms in the papers \cite{Reissner59,Reissner72} and in the classical books \cite[Sect. 47]{Sokolnikoff56}
and \cite[Sects. 94, 98, 102]{Timoshenko51}, for some special cases.

\section{Application: Circular cylindrical shells}
\label{Sect6}

The results obtained above are valid for cylindrical tubes with arbitrary cross-sections.  Let us specialize these results to the case of circular cylindrical tubes and derive the solution of the relaxed Saint-Venant's problem. 

The parametrization (\ref{eq9}) of the midsurface has in this case the following form
\begin{equation}\label{eq69}
x_1(s)= R_0\cos \frac{s}{R_0}\;,\qquad x_2(s)= R_0\sin \frac{s}{R_0}\;,\qquad s\in[0,\bar{s}]\,,
\end{equation}
where $ R_0 $ is the radius of the circular cylindrical surface and $ \bar{s}=2\pi R_0\, $.
From (\ref{eq69}) we deduce that
\begin{equation}\label{eq70}
\begin{array}{c}
\displaystyle
x_\alpha^\prime = e_{\beta\alpha}\frac{x_\beta}{R_0}\;,\quad 
x_\alpha^{\prime\prime} = -\frac{x_\alpha}{R_0^2}\;,\quad 
\boldsymbol \tau = \frac{1}{R_0}\,e_{\beta\alpha}x_\beta\boldsymbol e_\alpha\,,\quad
\boldsymbol n = \frac{1}{R_0}\,x_\alpha\boldsymbol e_\alpha\,,
\vspace{6pt}\\
\displaystyle
\int_{0}^{\bar{s}} x_\alpha\,\mathrm{d}s =0,\qquad I_{\alpha\beta}=\int_{0}^{\bar{s}} x_\alpha x_\beta\,\mathrm{d}s= \pi R_0^3 \delta_{\alpha\beta} \,
,\qquad \frac{2\mathcal{A}^2}{\bar{s}}= \pi R_0^3 \,.
\end{array}
\end{equation}
From (\ref{eq1o}) and (\ref{eq70}) we deduce the relations
\begin{equation}\label{eq75}
B_i = - \frac{\nu D}{R_0}\,A_i\qquad \mbox{and}\qquad \hat B_i = - \frac{\nu D}{R_0}\,\hat A_i \quad (i=1,2,3).
\end{equation}
Hence, $ \frac{\nu}{R_0} {\boldsymbol A} +  \frac{1}{D} {\boldsymbol B} = \boldsymbol 0  $  and the integral term in the solution (\ref{eq4o}) vanishes. The relations (\ref{eq62}), (\ref{eq61}) and (\ref{eq64}) reduce to
\begin{equation}\label{eq5o}
K =  \frac{-\mathcal{M}_3^0}{2\pi R_0^3\mu h}, \qquad 
 A_\alpha =  \frac{e_{\alpha\beta}\mathcal{M}_\beta^0}{\pi R_0^3Eh}, \qquad \bar{A}_3 =  \frac{-\mathcal{R}_3^0}{2\pi R_0Eh}, \qquad
 \hat{A}_\alpha = \frac{-\mathcal{R}_\alpha^0}{\pi R_0^3Eh}.
\end{equation}
In view of (\ref{eq70}), the torsion function (\ref{eq28}) is vanishing $ \varphi(s)=0  $ and from 
(\ref{eq2o}) and (\ref{eq3o}) we find
\begin{equation}\label{eq6o}
\psi(s)= \frac{4+3\nu}{2}\, R_0^2\, \hat{A}_\alpha x_\alpha\,, \qquad 
K_0 = 2(1+\nu)R_0^2\, \hat{A}_2 \qquad \mbox{and}\qquad  \tilde{K} = 0.
\end{equation}
Using the above results, we obtain from Theorem \ref{Th4} the following solution
\begin{corollary}\label{Cor1}
	Consider the relaxed Saint-Venant's problem (\ref{eq13})-(\ref{eq16}) with boundary conditions (\ref{eq22}) for circular cylindrical shells. Then, the solution in terms of displacements is given by (up to a rigid body displacement field)
	\begin{equation}\label{eq63}
	\begin{array}{l}
	u_\alpha= - \frac16 z^3 \hat A_\alpha - \frac12 z^2 A_\alpha -\nu z (\hat A_\beta x_\beta)x_\alpha - \nu (A_\beta x_\beta+ \bar{A}_3) x_\alpha  - K z \,e_{\alpha\beta}x_\beta\,,
	\vspace{4pt}\\
	u_3 =  \big[ \frac12 z^2  + 2(1+\nu)R_0^2\, \big] (\hat A_\alpha x_\alpha) + z(A_\alpha x_\alpha+ \bar{A}_3) ,
	\end{array}
	\end{equation}
	where the constants $ K$,  $ A_\alpha \,$, $ \bar{A}_3 $ and $ \hat A_\alpha \,$  represent measures of twist, curvature, stretch and flexure, respectively, which are given in terms of the resultant loads  $\mathcal{R}_i^0 $ and $ \mathcal{M}_i^0 $ by the relations (\ref{eq5o}).
\end{corollary}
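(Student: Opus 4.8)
The plan is to specialize the simplified displacement field of Theorem~\ref{Th4} to the circular parametrization (\ref{eq69}). First I would record the elementary identities collected in (\ref{eq70}): since $R(s)=R_0$ is constant we have $\boldsymbol\tau = R_0^{-1}e_{\beta\alpha}x_\beta\boldsymbol e_\alpha$, $\boldsymbol n = R_0^{-1}x_\alpha\boldsymbol e_\alpha$, hence $x_\beta x_\beta = R_0^2$, $x_\beta x_\beta'=0$, $x_\alpha' = R_0^{-1}e_{\beta\alpha}x_\beta$ and $x_\alpha'' = -R_0^{-2}x_\alpha$; moreover the centroid condition (\ref{eq59}) holds with $I_{\alpha\beta}=\pi R_0^3\delta_{\alpha\beta}$, while $\boldsymbol r\cdot\boldsymbol n = R_0$ gives $\mathcal A = \tfrac12\int_0^{\bar s}\boldsymbol r\cdot\boldsymbol n\,\mathrm{d}s = \pi R_0^2$ and $\bar s = 2\pi R_0$. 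Next, because $1/R$ is constant the simplified algebraic system (\ref{eq1o}) decouples componentwise: its right-hand sides involve only $\int_0^{\bar s}x_\alpha/R\,\mathrm{d}s = 0$ and $\int_0^{\bar s}x_\alpha x_\beta/R\,\mathrm{d}s = \pi R_0^2\delta_{\alpha\beta}$, so one reads off at once $B_i = -(\nu D/R_0)A_i$ and, with $\hat{\boldsymbol A},\hat{\boldsymbol B}$ in place of $\boldsymbol A,\boldsymbol B$, $\hat B_i = -(\nu D/R_0)\hat A_i$, which is (\ref{eq75}). The decisive consequence is $\tfrac{\nu}{R_0}\boldsymbol A + \tfrac1D\boldsymbol B = \boldsymbol 0$ and $\tfrac{\nu}{R_0}\hat{\boldsymbol A} + \tfrac1D\hat{\boldsymbol B} = \boldsymbol 0$; hence the combination $\tfrac{\nu}{R}(z\hat{\boldsymbol A}+\boldsymbol A)+\tfrac1D(z\hat{\boldsymbol B}+\boldsymbol B)$ in the last term of (\ref{eq4o}) vanishes identically, and likewise every double-integral ``shell-correction'' term in (\ref{eq2o})--(\ref{eq3o}) drops out.

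I would then specialize the load-response relations. Substituting $I_{\alpha\beta}=\pi R_0^3\delta_{\alpha\beta}$, $\mathcal A = \pi R_0^2$, $\bar s = 2\pi R_0$ and $\mu = \tfrac{E}{2(1+\nu)}$ into (\ref{eq62}), (\ref{eq61}) and (\ref{eq64}) yields immediately the four scalar formulas (\ref{eq5o}) for $K$, $A_\alpha$, $\bar A_3$ and $\hat A_\alpha$. Since $\boldsymbol r\cdot\boldsymbol n = R_0$ is constant, the torsion function (\ref{eq28}) collapses: $\varphi(s) = \tfrac{2\mathcal A}{\bar s}\,s - \int_0^s R_0\,\mathrm{d}s = R_0 s - R_0 s = 0$.

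It remains to evaluate the flexure function. With the shell-correction integrals already removed, (\ref{eq2o}) reduces to $\psi(s) = K_0 s - \int_0^s\hat A_\alpha\big[\tfrac{\nu}{2}x_\alpha'(x_\beta x_\beta) - \nu x_\alpha(x_\beta x_\beta') + 2(1+\nu)\int_0^s x_\alpha\,\mathrm{d}s\big]\,\mathrm{d}s$. Using $x_\beta x_\beta = R_0^2$, $x_\beta x_\beta'=0$, $x_\alpha' = R_0^{-1}e_{\beta\alpha}x_\beta$ together with the primitives $\int_0^s x_1\,\mathrm{d}s = R_0 x_2$ and $\int_0^s x_2\,\mathrm{d}s = R_0^2 - R_0 x_1$ (the additive constants being rigid translations), the inner bracket becomes a multiple of $e_{\alpha\beta}\hat A_\alpha x_\beta$ plus a constant, and one further quadrature gives $\psi(s) = K_0 s + \tfrac{4+3\nu}{2}R_0^2\,\hat A_\alpha x_\alpha - 2(1+\nu)R_0^2\hat A_2\,s$ up to a constant. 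Evaluating the iterated integral in (\ref{eq3o}) with $\boldsymbol r\cdot\boldsymbol n = R_0$ --- the only surviving contribution being $\int_0^{\bar s}x_2\int_0^s\boldsymbol r\cdot\boldsymbol n\,\mathrm{d}s\,\mathrm{d}s = -2\pi R_0^4$, since $\int_0^{\bar s}x_1\,s\,\mathrm{d}s = 0$ --- gives $K_0 = 2(1+\nu)R_0^2\hat A_2$, which annihilates the linear-in-$s$ part, so that $\psi(s) = \tfrac{4+3\nu}{2}R_0^2\,\hat A_\alpha x_\alpha$; the same computation, using $\varphi\equiv 0$ and $\int_0^{\bar s}x_\alpha'\,\mathrm{d}s = 0$, forces $\tilde K = 0$. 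This establishes (\ref{eq6o}).

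Finally I would substitute $\varphi\equiv 0$, $\tilde K = 0$, the expression for $\psi$, and the vanishing of the integral term into the displacement field (\ref{eq4o}) of Theorem~\ref{Th4}. Besides the terms displayed in (\ref{eq63}), this leaves the extra contributions $\tfrac12\nu R_0^2 A_\alpha$ and $\tfrac12\nu R_0^2 z\,\hat A_\alpha$ in $u_\alpha$, coming from the factor $x_\beta x_\beta = R_0^2$. The first is a constant, hence a rigid translation; the second is the in-plane part of an infinitesimal rotation $\boldsymbol d\times\boldsymbol r$ whose $\boldsymbol e_3$-component equals $-\tfrac12\nu R_0^2\,\hat A_\alpha x_\alpha$. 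Subtracting this rigid-body field removes $\tfrac12\nu R_0^2 z\,\hat A_\alpha$ from $u_\alpha$ and simultaneously shifts the $z$-independent coefficient of $\hat A_\alpha x_\alpha$ in $u_3$ from $\tfrac{4+3\nu}{2}R_0^2$ to $\tfrac{4+3\nu}{2}R_0^2 + \tfrac{\nu}{2}R_0^2 = 2(1+\nu)R_0^2$, which is precisely the form announced in (\ref{eq63}). I expect the one genuinely delicate point to be this last bookkeeping: one must correctly identify which additive constants and $z$-linear terms produced by the iterated $s$-integrations are rigid-body displacements --- and may therefore be discarded in accordance with the ``up to a rigid body displacement field'' clause --- and verify that the rigid rotation used to tidy $u_\alpha$ carries with it the correct shift of $u_3$. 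Everything else is routine: the surviving one-dimensional integrals are elementary integrals of $1$, $x_\alpha$, $x_\alpha'$ and $s\,x_\alpha$ over the full period $[0,2\pi R_0]$, nearly all of which vanish by orthogonality.
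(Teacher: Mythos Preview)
Your proposal is correct and follows essentially the same route as the paper: specialize the identities (\ref{eq70}) to deduce (\ref{eq75}), note that $\tfrac{\nu}{R_0}\boldsymbol A+\tfrac1D\boldsymbol B=\boldsymbol 0$ kills the integral terms in (\ref{eq4o}) and (\ref{eq2o})--(\ref{eq3o}), read off (\ref{eq5o}) from (\ref{eq62}), (\ref{eq61}), (\ref{eq64}), and compute $\varphi\equiv 0$, $\tilde K=0$, $K_0$ and $\psi$ as in (\ref{eq6o}). In fact you supply more detail than the paper does: the paper simply records (\ref{eq6o}) and states the Corollary ``up to a rigid body displacement field'' without spelling out the final bookkeeping, whereas you correctly identify that the leftover terms $\tfrac12\nu R_0^2 A_\alpha$ and $\tfrac12\nu R_0^2 z\,\hat A_\alpha$ in $u_\alpha$ are, respectively, a translation and the in-plane part of an infinitesimal rotation whose $\boldsymbol e_3$-component shifts the coefficient of $\hat A_\alpha x_\alpha$ in $u_3$ from $\tfrac{4+3\nu}{2}R_0^2$ to $2(1+\nu)R_0^2$. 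One minor presentational point: since the expression for $K_0$ in (\ref{eq3o}) contains $\tilde K$, it is cleaner to verify $\tilde K=0$ first (from $\varphi\equiv 0$ and $\int_0^{\bar s}x_\alpha'\,\mathrm{d}s=0$) and then compute $K_0$, rather than the reverse order you use; but the logic is sound either way.
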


These results are consistent with classical solutions for the deformation of circular cylindrical shells, see e.g. \cite{Sokolnikoff56} and \cite{Timoshenko51}.

\section{Final remarks and conclusions}
\label{Sect7}

We have presented a solution procedure to solve the relaxed Saint-Venant's problem for cylindrical Koiter shells. The general method has been established by Ie\c san \cite{Iesan86,Iesan87} in the context of three-dimensional elasticity and applied previously by the author in the case of Cosserat surfaces \cite{Birsan-JE-04}. We have obtained an explicit solution with exact closed form, see Theorems \ref{Th2} and \ref{Th3}. In the case of very thin shells we can neglect some higher order terms and derive an approximate solution, which has a simple form (see Theorem \ref{Th4}). We remark the formal resemblance of the simplified solution with the classical Saint-Venant solutions for solid cylinders. Moreover, the determined solutions possess several characteristical properties of the classical Saint-Venant solution. For instance, the stress tensors $ \boldsymbol N $ and $ \boldsymbol M $ are independent of the axial coordinate $ z $ for the extension-bending-torsion solution (see (\ref{eq43})); for the flexure problem, $ \boldsymbol N $ and $ \boldsymbol M $ depend on $ z $ at most linearly (see (\ref{eq56})).

The solution presented in this paper is new in its generality, although one can find in the literature several approaches to this problem with various solution procedures. Thus, some of the results obtained in Section \ref{Sect5} are well known, such as for instance the formulas (\ref{eq62}), (\ref{eq61}), which express the relations between the measures of twist, curvature and stretch and the twisting moments, bending moments and axial force, respectively. 
The deformation of cylindrical shells was investigated in the monographs of Lurie \cite{Lurie47}, Vlasov \cite{Vlasov49} Timoshenko and Goodier \cite{Timoshenko51}, Sokolnikoff \cite{Sokolnikoff56}, Novozhilov \cite{Novozhilov59}, and Goldenveizer \cite{Goldenveizer61} using different variants of the classical shell theory, also referred to as the first order approximation shell theory. The mostly used variants of the classical shell theory are Love's theory \cite{Love27}, the Novozhilov-Balabuch theory \cite{Novozhilov59} and the Koiter theory \cite{Koiter60}.
All these versions of the classical theory of shells differ from the Koiter model only by small terms of order $ O(\frac{h}{R}) $. For example, Goldenveizer \cite{Goldenveizer61} has employed a shell theory based on the Novozhilov-Balabuch equations to investigate the deformation of cylindrical shells: the equilibrium equations were integrated by using trigonometric series expansions (for circular cylindrical shells) or Fourier series expansions (for general cylindrical shells), when the end edge are either free, hinged or clamped. 

In our paper, we have used the Koiter shell model, since it is a consistent first order model which
has the advantage of simplicity as compared to other more refined models and has a wide range of applicability \cite{Ventsel-01}. Classical shell theory works very well if it is applied to problems which fulfill all the constraints posed by the accepted assumptions. These assumptions include the Kirchhoff hypotheses (i.e., normals to the undeformed middle surface remain straight and normal to the deformed middle surface and undergo no extension; moreover, the transverse normal stress is small compared with other normal stress components and may be neglected), together with the small-deflection assumption and the thinness assumption. This set of assumptions is usually called the Kirchhoff--Love hypotheses and is a common feature of any first-order approximation shell theory. For such models, the extension and bending deformations of isotropic shells are decoupled.

It is known that Kirchhoff--Love hypotheses introduce in the classical shell equations errors of order $ O(\frac{h}{R}) $ in comparison with 1. (In general, shell theories neglecting terms of order $ O(\frac{h}{R}) $ are called classical.)  Several authors have shown in \cite{Morgenstern59,John65,Koiter71,Koiter-Sim73,Ladeveze76} that the classical shell theory gives correct predictions for the stress state of a shell problem, i.e. with small errors of order $ O(\frac{h}{R}) $ 
as compared to the verified solution, given by improved shell models of higher accuracy level.
Accordingly, our solution for the stress state (\ref{eq4obis}) to the relaxed Saint-Venant's problem is also correct in the first approximation sense. 

On the other hand, the predictions for the displacement field of the classical shell theory may be incorrect under certain conditions, as it was pointed out in the paper \cite{Berdic-Mis92}. 
This effect of accuracy loss for displacements in cylindrical shells is only possible when the shell extension is strong, i.e. when (see relation (14) in \cite{Berdic-Mis92})
\begin{equation}\label{eq8o}
\max_{\mathcal{S}} \| \boldsymbol \epsilon \| \,\,\gg\,\, h\, \max_{\mathcal{S}} \| \boldsymbol \rho \| \;,
\end{equation}
where $\, \| \boldsymbol \epsilon \|^2 = \epsilon_{\alpha\beta} \epsilon^{\alpha\beta}\, $ and \,$ \| \boldsymbol \rho \|^2 =  \rho_{\alpha\beta} \rho^{\alpha\beta} \, $ are the norms of the strain tensors. In case of accuracy loss for displacements, one should solve the problem using a more refined shell model. Refined shell theories provide two-dimensional shell models of higher accuracy level, in which the coupling between extension and bending deformations is taken into account. This means, for instance, that extensional forces are present in a shell subjected to bending, and also, that bending moments appear in a shell under extension.
For a comparison of the most important variants of the classical refined shell theories, written in unified notations, we refer to the paper of Berdichevsky and Misyura \cite{Berdic-Mis92}.
 Since the condition (\ref{eq8o}) is fulfilled for the solution given by Theorem \ref{Th4}, we verify the accuracy of the displacement solution (\ref{eq4o}) by comparison with other results established previously in the framework of refined shell theories for some special cases.
Reissner and Tsai \cite{Reissner72} have used a six-parameter shell model to solve the extension-bending-torsion problem, assuming that stress and strain tensors are independent of the axial coordinate $ z $. In case of isotropic shells, the displacement solution for the extension-bending-torsion problem is in accordance with our results in Theorem \ref{Th4} and Corollary \ref{Cor1}.
The same solution can be obtained also from the work of Berdichevsky et al. \cite{Berdichevsky92}; see also Ladev\`eze et al. \cite{Ladeveze04}. The Saint-Venant's problem for shells has been approached previously by the author using other refined models, such as the theory of Cosserat shells \cite{Birsan-JE-04}, or the theory of directed surfaces of Zhilin and Altenbach \cite{Altenbach-Zhilin-88,Altenbach04,Zhilin76,Zhilin06} in \cite{Birsan-Alten-2011,Birsan-JTS-13}. One can see that the displacement fields obtained in these different approaches compare very well with solution (\ref{eq4o}), which shows that it does not feature any accuracy loss.

In conclusion, the presented solutions are the counterparts of Saint-Venant's solutions in the classical theory of shells. These can be used in applications and as benchmark solutions for the comparison with numerical results. Although some of these results are known for certain special cases, we present them here in an unified manner, together with a general solution procedure. Finally, we mention that this method can be extended to solve the relaxed Saint-Venant's problem for orthotropic or anisotropic shells \cite{Birsan-IJES-09,Birsan-Alten-2011} 
or to investigate thermal stresses in cylindrical shells \cite{Birsan-EJM/S-09,Birsan-JTS-13}.

\bigskip\bigskip\bigskip

\textbf{Acknowledgements:}
This research has been funded by the Deutsche Forschungsgemeinschaft (DFG, German Research Foundation; project no. 415894848).


\bibliographystyle{plain} 

\bibliography{literatur_Birsan}

\end{document}